\date{} 
\begin{document} 
\centerline {\bf {On Generalization of $\displaystyle\delta$-Primary Elements in Multiplicative Lattices} }
\centerline{} 

\centerline{\bf {A.~V.~Bingi}}
\centerline{Department of Mathematics}
\centerline{St.~Xavier's College(autonomous), Mumbai-400001, India}
\centerline{$email: ashok.bingi@xaviers.edu$}

\newtheorem{thm}{Theorem}[section]
 \newtheorem{c1}[thm]{Corollary}
 \newtheorem{l1}[thm]{Lemma}
 \newtheorem{prop}[thm]{Proposition}
 \newtheorem{d1}[thm]{Definition}
\newtheorem{rem}[thm]{Remark}
 \newtheorem{e1}[thm]{Example}
\begin{abstract}
  In this paper, we introduce $\phi$-$\delta$-primary elements in a compactly generated multiplicative lattice $L$ and obtain its characterizations. We prove many of its properties and investigate the relations between these structures. By a counter example, it is shown that a $\phi$-$\delta$-primary element of $L$ need not be $\delta$-primary and found conditions under which a $\phi$-$\delta$-primary element of $L$ is $\delta$-primary.
\end{abstract}

{\bf 2010 Mathematics Subject Classification:} 06B99 
\paragraph*{}
{\bf Keywords:-} expansion function,  $\delta$-primary element, $\phi$-$\delta$-primary element, 2-potent $\delta$-primary element, $n$-potent $\delta$-primary element, global property
\section{Introduction}
\paragraph*{}  
Prime ideals play a central role in commutative ring theory. In the literature, we find that there are several ways to generalize the notions of a prime ideal and a primary ideal of a commutative ring $R$ with unity. A prime ideal $P$ of $R$ is an ideal with the property that for all $a,\ b\in R$, $ab\in P$ implies either $a\in P$ or $b\in P$. We can either restrict or enlarge where $a$ and/or $b$ lie or restrict or enlarge where $ab$ lies. Same can be thought for primary ideals too. As a generalization of prime ideals of $R$, $\phi$-prime ideals were introduced in $\cite{AB2}$ and $\cite{EN}$ while as a generalization of primary ideals of $R$, $\phi$-primary ideals were introduced in $\cite{D}$. In an attempt to unify the prime and primary ideals of $R$ under one frame, $\delta$-primary ideals of $R$ were introduced in $\cite{Z}$. Further, the concept of $\delta$-primary ideals of $R$ was generalized by introducing the notion of $\phi$-$\delta$-primary ideals of $R$ in $\cite{J1}$.

As an extension of these concepts of a commutative ring $R$ to a multiplicative lattice $L$, C.~S.~Manjarekar and A.~V.~Bingi introduced $\delta$-primary elements of $L$ in \cite{MB1} and introduced $\phi$-prime, $\phi$-primary elements of $L$ in \cite{MB}. In this paper, we introduce and study, $\phi$-$\delta$-primary elements of $L$ as a generalization of $\delta$-primary elements of $L$ and unify $\phi$-prime and $\phi$-primary elements of $L$ under one frame.

A multiplicative lattice $L$ is a complete lattice provided with commutative, associative and join distributive multiplication in which the largest element $1$ acts as a multiplicative identity. An element $e\in L$ is called meet principal if $a\wedge be=((a:e)\wedge b)e$ for all $a,\ b\in L$. An element $e\in L$ is called join principal if $(ae\vee b):e=(b:e)\vee a$ for all $a,\ b\in L$. An element $e\in L$ is called principal if $e$ is both meet principal and join principal. A multiplicative lattice $L$ is said to be principally generated(PG) if every element of $L$ is a join of principal elements of $L$. An element $a\in L$ is called compact if for  $X\subseteq L$, $a\leqslant \vee X$  implies the existence of a finite number of elements $a_1,a_2,\cdot\cdot\cdot,a_n$ in $X$ such that $a\leqslant a_1\vee a_2\vee\cdot\cdot\cdot\vee a_n$. The set of compact elements of $L$ will be denoted by $L_\ast$. If each element of $L$ is a join of compact elements of $L$ then $L$ is called a compactly generated lattice or simply a CG-lattice. 
       
An element $a\in L$ is said to be proper if $a<1$. The radical of $a\in L$ is denoted by $\sqrt{a}$ and is defined as $\vee \{ x \in L_\ast \mid x^{n} \leqslant a$, for  some  $n\in Z_+\}$. A proper element $m\in L$ is said to be maximal if for every element $x\in L$ such that $m<x\leqslant 1$ implies $x=1$. A proper element $p\in L$ is called a prime element if $ab\leqslant p$ implies $a\leqslant p$ or $b\leqslant p$ where $a,b\in L$ and is called a primary element if $ab\leqslant p$ implies $a\leqslant p$ or $b\leqslant \sqrt{p}$ where $a,b\in L_\ast$. For $a,b\in L $, $(a:b)= \vee \{x \in L \mid xb \leqslant a\} $. A multiplicative lattice is called as a Noether lattice if it is modular, principally generated and satisfies ascending chain condition.  An element $a\in L$ is called a zero divisor if $ab=0$ for some $0\neq b\in L$ and is called idempotent if $a=a^2$. A multiplicative lattice is said to be a domain if it is without zero divisors and is said to be quasi-local if it contains a unique maximal element. A quasi-local multiplicative lattice $L$ with maximal element $m$ is denoted by $(L,\ m)$. A Noether lattice $L$ is local if it contains precisely one maximal prime. In a Noether lattice $L$, an element $a\in L$ is said to satisfy  restricted cancellation law if for all $b,\ c\in L$, $ab=ac\neq0$ implies $b=c$ (see $\cite{W}$). According to \cite{MB1}, An expansion function on $L$ is a function $\delta:L \longrightarrow L$ which satisfies the following two conditions: \textcircled{1}. $a\leqslant\delta(a)$ for all $a\in L$,
\textcircled{2}. $a\leqslant b$ implies $\delta(a)\leqslant\delta(b)$ for all $a,\ b\in L$ and a proper element $p\in L$ is called $\delta$-primary if for all $a,\ b\in L$, $ab\leqslant p$ implies either $a\leqslant p$ or $b\leqslant\delta(p)$. According to \cite{MB},  a proper element $p\in L$ is said to be $\phi$-prime if for all $a,\ b\in L$, $ab\leqslant p$ and $ab\nleqslant\phi(p)$ implies either $a\leqslant p$ or $b\leqslant p$ and a proper element $p\in L$ is said to be $\phi$-primary if for all $a,\ b\in L$, $ab\leqslant p$ and $ab\nleqslant\phi(p)$ implies either $a\leqslant p$ or $b\leqslant\sqrt p$ where $\phi:L \longrightarrow L$ is a function on $L$. The reader is referred to \cite{AAJ} and \cite{D3} for general background and terminology in multiplicative lattices.      
             
         This paper is motivated by \cite{J1}. In this paper, we define a $\phi$-$\delta$-primary element in $L$ and obtain their characterizations. Various $\phi_\alpha$-$\delta$-primary elements of $L$ are introduced and relations among them are obtained. By counter examples, it is shown that a $\phi$-$\delta$-primary element of $L$ need not be $\phi$-prime, a $\phi$-$\delta$-primary element of $L$ need not be prime and a $\phi$-$\delta$-primary element of $L$ need not be $\delta$-primary. In 7 different ways, we have proved that a $\phi$-$\delta$-primary element of $L$ is $\delta$-primary under certain conditions. We define a $2$-potent $\delta$-primary element of $L$ and a $n$-potent $\delta$-primary element of $L$. We investigate some properties of $\phi$-$\delta$-primary elements of $L$ with respect to lattice homomorphism and global property. Finally, we show that every idempotent element of $L$ is $\phi_2$-$\delta$-primary but converse need not be true. Throughout this paper, \textcircled{1}. $L$ denotes a compactly generated multiplicative lattice with $1$ compact in which every finite product of compact elements is compact, \textcircled{2}. $\delta$ denotes an expansion function on $L$ and \textcircled{3}. $\phi$ denotes a function defined on $L$.
         
       \section{$\phi$-$\delta$-primary elements of $L$}
       
       \paragraph*{} 
       We begin with introducing the notion of  $\phi$-$\delta$-primary elements of $L$ which is the      generalization of the concept of $\delta$-primary elements of $L$. 
        
              \begin{d1}\label{D-C2} 
               Given an expansion function $\delta:L\longrightarrow L$ and a function $\phi:L\longrightarrow L$, a proper element $p\in L$ is said to be $\phi$-$\delta$-primary if for all $a,\ b\in L$, $ab\leqslant p$ and $ab\nleqslant\phi(p)$ implies either $a\leqslant p$ or $b\leqslant \delta(p)$.
              \end{d1}
              
              For the special functions $\phi_\alpha:L\longrightarrow L$, the $\phi_\alpha$-$\delta$-primary elements of $L$ are defined by following settings in the definition \ref{D-C2} of a $\phi$-$\delta$-primary element of $L$. For any proper element $p\in L$ in the definition \ref{D-C2}, in place of $\phi(p)$, set
                     \begin{itemize}
                     \item $\phi_0(p)=0$. Then $p\in L$ is called a {\bf weakly $\delta$-primary} element.
                     \item $\phi_2(p)=p^2$. Then $p\in L$ is called a {\bf $2$-almost $\delta$-primary} element or  a {\bf $\phi_2$-$\delta$-primary} element or simply an {\bf almost primary} element.
                     \item $\phi_n(p)=p^n\ (n>2)$. Then $p\in L$ is called an {\bf $n$-almost $\delta$-primary} element or a {\bf $\phi_n$-$\delta$-primary} element $(n>2)$.
                     \item $\phi_\omega(p)=\bigwedge_{i=1}^{\infty}p^n$. Then $p\in L$ is called a {\bf $\omega$-$\delta$-primary} element or {\bf $\phi_\omega$-$\delta$-primary} element.
                     \end{itemize} 
                                          
  Since for an element $a\in L$ with $a\leqslant q$ but $a\nleqslant \phi(q)$ implies that $a \nleqslant q\wedge \phi(q)$, there is no loss generality in assuming that $\phi(q) \leqslant q$. We henceforth make this assumption. 
              
     \begin{d1}
     Given any two functions $\gamma_1, \gamma_2 : L\longrightarrow L$, we define $\gamma_1\leqslant \gamma_2$ if $\gamma_1(a)\leqslant \gamma_2(a)$ for each $a\in L$.
     \end{d1} 
     
     Clearly, we have the following order:
     
     \centerline{ $\phi_0 \leqslant \phi_\omega \leqslant \cdots \leqslant \phi_{n+1} \leqslant \phi_n \leqslant \cdots \leqslant \phi_2 \leqslant \phi_1$}
     
 \medskip
 
  Further as $\phi(p)\leqslant p$ and $p\leqslant \delta(p)$ for each $p\in L$, the relation between the functions $\delta$ and $\phi$ is $\phi\leqslant \delta$.
  
  \medskip  
      
  According to \cite{MB1}, $\delta_0$ is an expansion function on $L$ defined as $\delta_0(p)=p$ for each $p\in L$ and $\delta_1$ is an expansion function on $L$ defined as $\delta_1(p)=\sqrt{p}$ for each $p\in L$. Further, note that by Theorem 2.2. in \cite{MB1}, a proper element $p\in L$ is $\delta_0$-primary if and only if it is prime and by Theorem 2.3. in \cite{MB1}, a proper element $p\in L$ is $\delta_1$-primary if and only if it is primary.\\
 
 The following 2 results relate $\phi$-prime and $\phi$-primary elements of $L$ with some $\phi$-$\delta$-primary elements of $L$.
 
 \begin{thm}
 A proper element $p\in L$ is $\phi$-$\delta_0$-primary if and
  only if $p$ is $\phi$-prime.
 \end{thm}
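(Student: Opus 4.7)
The plan is to observe that this equivalence is essentially a direct unpacking of definitions, so the proof should be quite short. The key fact to exploit is that $\delta_0$ is defined by $\delta_0(p)=p$ for every $p\in L$ (as recalled from \cite{MB1} just before the theorem), so applying $\delta_0$ to $p$ does nothing.

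For the forward direction, I would assume $p$ is $\phi$-$\delta_0$-primary and take arbitrary $a,b\in L$ with $ab\leqslant p$ and $ab\nleqslant \phi(p)$. By definition of $\phi$-$\delta$-primary with $\delta=\delta_0$, this yields either $a\leqslant p$ or $b\leqslant \delta_0(p)$; but $\delta_0(p)=p$, so this is precisely the conclusion that $a\leqslant p$ or $b\leqslant p$, i.e., $p$ is $\phi$-prime. For the converse, I would reverse this reasoning: if $p$ is $\phi$-prime and $ab\leqslant p$ with $ab\nleqslant \phi(p)$, then $a\leqslant p$ or $b\leqslant p=\delta_0(p)$, which is the definition of $\phi$-$\delta_0$-primary.

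Since $p$ is proper in both definitions, no extra work is needed to handle that hypothesis. There is no real obstacle here; the only thing to be careful about is the direction of the inequality $\delta_0(p)=p$ and making sure I quote the exact definitions of $\phi$-prime (from \cite{MB}) and $\phi$-$\delta$-primary (Definition \ref{D-C2}) so the substitution $\delta=\delta_0$ is transparent. The proof can thus be written as two short paragraphs or even as a single line noting that both conditions coincide upon substitution.
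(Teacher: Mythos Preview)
Your proposal is correct and matches the paper's approach: the paper simply writes ``The proof is obvious,'' and your argument is precisely the definitional unpacking that justifies this, using $\delta_0(p)=p$ to see that the conditions in Definition~\ref{D-C2} and the definition of $\phi$-prime coincide verbatim.
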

 \begin{proof}
 The proof is obvious.
 \end{proof}
 
  \begin{thm}
  A proper element $p\in L$ is $\phi$-$\delta_1$-primary if and
   only if $p$ is $\phi$-primary.
  \end{thm}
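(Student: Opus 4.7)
The plan is to prove this by straightforward unfolding of definitions, exactly parallel to the previous theorem characterizing $\phi$-$\delta_0$-primary elements as $\phi$-prime elements. Since $\delta_1$ is defined by $\delta_1(p) = \sqrt{p}$ for every $p \in L$, both implications should fall out immediately upon comparing the definition of $\phi$-$\delta$-primary with that of $\phi$-primary.

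For the forward direction, I would start by assuming $p \in L$ is a proper $\phi$-$\delta_1$-primary element. I would take arbitrary $a, b \in L$ with $ab \leqslant p$ and $ab \nleqslant \phi(p)$. Applying Definition \ref{D-C2} with $\delta = \delta_1$, I obtain either $a \leqslant p$ or $b \leqslant \delta_1(p)$. Since $\delta_1(p) = \sqrt{p}$ by definition, this gives $a \leqslant p$ or $b \leqslant \sqrt{p}$, which is precisely the condition for $p$ to be $\phi$-primary.

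The reverse direction is symmetric. Assuming $p$ is $\phi$-primary and taking $a, b \in L$ with $ab \leqslant p$ and $ab \nleqslant \phi(p)$, the $\phi$-primary condition yields $a \leqslant p$ or $b \leqslant \sqrt{p} = \delta_1(p)$, which shows $p$ is $\phi$-$\delta_1$-primary. Since no genuine obstacle appears and the argument is pure definition-chasing, I expect the authors to simply write \textquotedblleft The proof is obvious\textquotedblright{} or \textquotedblleft The proof follows immediately from the definition\textquotedblright{} just as they did for the previous theorem; there is nothing more substantive to verify.
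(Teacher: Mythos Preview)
Your proposal is correct and matches the paper's approach exactly: the authors simply write ``The proof is obvious,'' and your definition-chasing argument is precisely the obvious proof they have in mind, since $\delta_1(p)=\sqrt{p}$ makes the $\phi$-$\delta_1$-primary condition identical to the $\phi$-primary condition.
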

  \begin{proof}
  The proof is obvious.
  \end{proof}

 \begin{thm}\label{T-C21}
 Let $\delta,\ \gamma :L \longrightarrow L$ be expansion functions on $L$ such that $\delta \leqslant\gamma$. Then every $\phi$-$\delta$-primary element of $L$ is $\phi$-$\gamma$-primary. In particular, a $\phi$-prime element of $L$ is $\phi$-$\delta$-primary for every expansion function $\delta$ on $L$.
\end{thm}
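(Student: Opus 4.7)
The plan is to prove the first assertion by directly unpacking the definitions, and then to derive the ``in particular'' clause as an immediate corollary combined with the preceding theorem that identifies $\phi$-prime elements with $\phi$-$\delta_0$-primary elements.

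First I would fix a $\phi$-$\delta$-primary element $p\in L$ and take arbitrary $a,\ b\in L$ satisfying $ab\leqslant p$ and $ab\nleqslant \phi(p)$. Invoking the defining property of a $\phi$-$\delta$-primary element (Definition~\ref{D-C2}), I obtain that either $a\leqslant p$ or $b\leqslant \delta(p)$. In the second alternative, I would use the hypothesis $\delta\leqslant \gamma$, which by the definition just preceding this theorem means $\delta(p)\leqslant \gamma(p)$, to conclude $b\leqslant \gamma(p)$. Thus $a\leqslant p$ or $b\leqslant \gamma(p)$, which is exactly the defining condition of $p$ being $\phi$-$\gamma$-primary.

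For the ``in particular'' statement, I would first observe that $\delta_0$, defined by $\delta_0(p)=p$, is pointwise the smallest expansion function on $L$: for any expansion function $\delta$, axiom \textcircled{1} gives $\delta_0(p)=p\leqslant \delta(p)$ for each $p\in L$, i.e.\ $\delta_0\leqslant \delta$. Combining this with the preceding theorem, which identifies a $\phi$-prime element of $L$ as precisely a $\phi$-$\delta_0$-primary element, and with the main assertion just proved, one concludes that every $\phi$-prime element of $L$ is $\phi$-$\delta$-primary for every expansion function $\delta$ on $L$.

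There is no genuine obstacle here; the statement is essentially a monotonicity observation built into the definitions. The only care needed is to keep straight which inequality comes from which source: the standing assumption $\phi(p)\leqslant p$ plays no role in this particular argument, while the hypothesis $\delta\leqslant \gamma$ is precisely what transports the bound on $b$ from $\delta(p)$ up to $\gamma(p)$.
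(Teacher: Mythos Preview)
Your proposal is correct and follows essentially the same route as the paper: a direct unpacking of Definition~\ref{D-C2} together with $\delta(p)\leqslant\gamma(p)$ for the first part, and then the observation $\delta_0\leqslant\delta$ combined with the identification of $\phi$-prime with $\phi$-$\delta_0$-primary for the ``in particular'' clause. The paper's version is simply terser.
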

         \begin{proof}Let a proper element $p\in L$ be $\phi$-$\delta$-primary. Suppose $ab\leqslant p$ and $ab\nleqslant \phi(p)$ for $a,\ b\in L$. Then either $a\leqslant p$ or $b\leqslant \delta(p)\leqslant\gamma(p)$ and so $p$ is $\phi$-$\gamma$-primary. Next, for any expansion function $\delta$ on $L$, we have $\delta_0\leqslant \delta$. So a $\phi$-$\delta_0$-primary element of $L$ is $\phi$-$\delta$-primary and we are done since a  $\phi$-prime element of $L$ is $\phi$-$\delta_0$-primary.
         \end{proof}
         
         \begin{c1}
         A prime element of $L$ is $\phi$-$\delta$-primary for every expansion function $\delta$ on $L$.
         \end{c1}
         \begin{proof}
         The proof follows by using Theorem \ref{T-C21} to the fact that every prime element of $L$ is $\phi$-prime.
         \end{proof}

         The following example shows that (by taking $\phi$ as $\phi_2$ and $\delta$ as $\delta_1$ for convenience) 
         
         \textcircled{1}. a $\phi$-$\delta$-primary element of $L$ need not be $\phi$-prime,
          
         \textcircled{2}. a $\phi$-$\delta$-primary element of $L$ need not be prime.
         
         \begin{e1}\label{E-C21}
         Consider the lattice $L$ of ideals of the ring $R=<Z_{24}\ , \ +\ ,\ \cdot>$. Then the only ideals of $R$ are the principal ideals (0),(2),(3),(4),(6),(8),(12),(1). Clearly, $L=\{$(0),(2),(3),(4),(6),(8),(12),(1)$\}$ is a compactly generated multiplicative lattice. Its lattice structure and multiplication table is as shown in Figure 2.1. It is easy to see that the element $(4)\in L$ is $\phi_2$-$\delta_1$-primary while $(4)$ is not  $\phi_2$-prime because though $(2)\cdot (6)\subseteq (4)$, $(2)\cdot (6)\nsubseteq (4)^2$ but $(2)\nsubseteq (4)$ and $(6)\nsubseteq (4)$. Also, $(4)$ is not prime.\\
         
         %\begin{figure}  
         \centerline{\includegraphics[width = 125mm, height = 75mm]{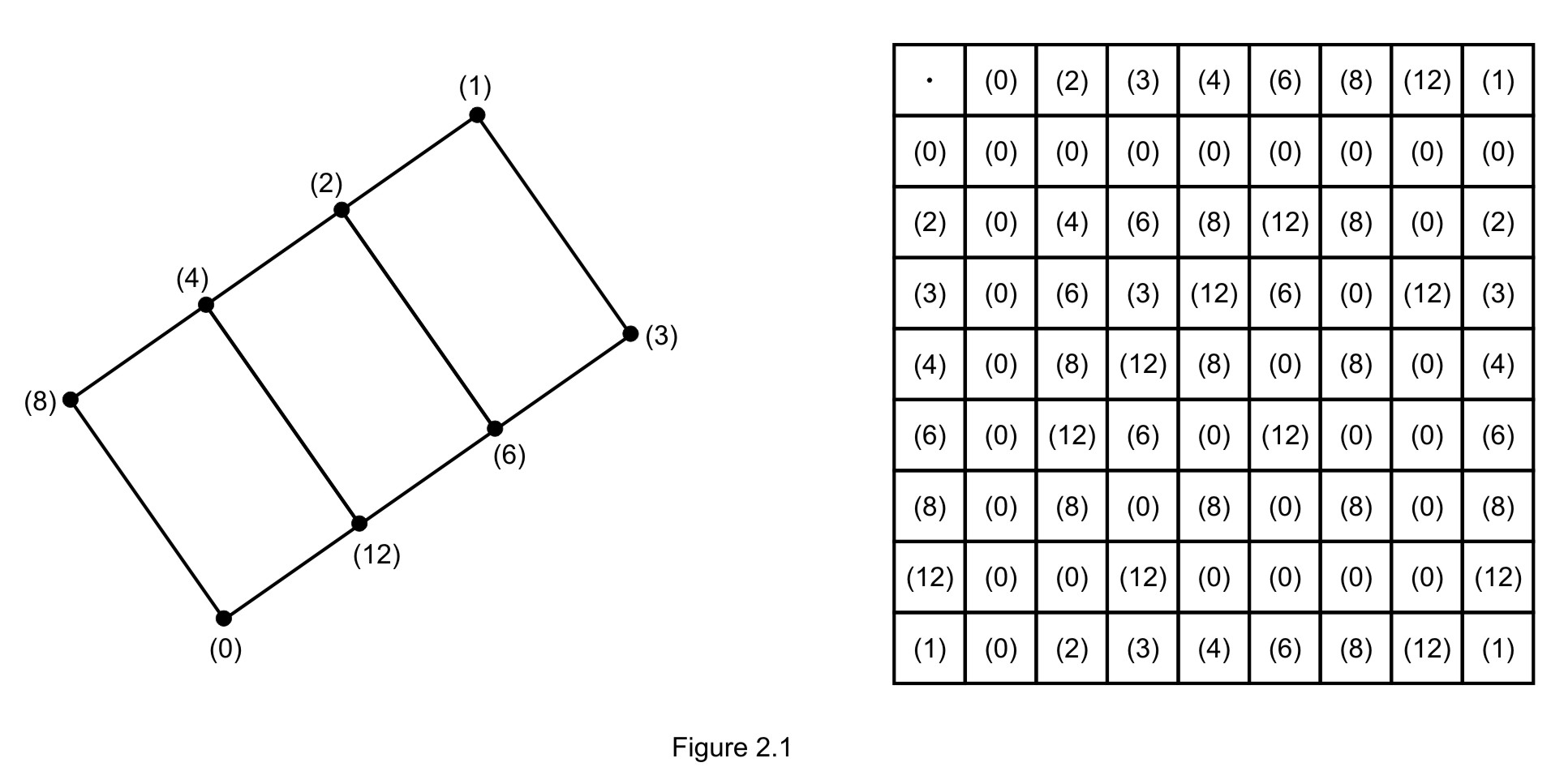}}
         %\end{figure}
         %\caption{}\label{usml1}
        \end{e1}
                 
   Now before obtaining the characterizations of a $\phi$-$\delta$-primary element of $L$, we state the following essential lemma which is outcome of Lemma 2.3.13 from \cite{S0}.
                    
                    \begin{l1}\label{L}
                    Let $a_1,\ a_2\in L$. Suppose $b\in L$ satisfies the following property:
                    
                    ($\ast$). If $h\in L_\ast$ with $h\leqslant b$ then either $h\leqslant a_1$ or $h\leqslant a_2$.
                    
                    Then either $b\leqslant a_1$ or $b\leqslant a_2$.
                    \end{l1}                             
             
         \begin{thm}\label{T-C26}
         Let $q$ be a proper element of $L$. Then the following statements are equivalent:
         
         \textcircled{1}. $q$ is $\phi$-$\delta$-primary. 
         
         \textcircled{2}. for every $a\in L$ such that $a\nleqslant \delta(q)$, either $(q:a)=q$ or $(q:a)=(\phi(q):a)$. 
         
         \textcircled{3}. for every $r,\ s\in L_\ast$, $rs\leqslant q$ and $rs\nleqslant \phi(q)$ implies either $s\leqslant q$ or $r\leqslant \delta(q)$. 
         \end{thm}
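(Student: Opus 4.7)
The plan is to prove the cycle $(1) \Rightarrow (2) \Rightarrow (3) \Rightarrow (1)$. For $(1) \Rightarrow (2)$, I fix $a \in L$ with $a \nleqslant \delta(q)$ and set $c = (q:a)$. Since multiplication in $L$ is join-distributive, $ca \leqslant q$, and I would split on whether $ca \leqslant \phi(q)$. If $ca \leqslant \phi(q)$, then $c \leqslant (\phi(q):a)$, which combined with the always-true reverse inequality $(\phi(q):a) \leqslant c$ (using $\phi(q) \leqslant q$) gives $(q:a) = (\phi(q):a)$. Otherwise, applying the $\phi$-$\delta$-primary property of $q$ directly to the product $c \cdot a$ yields $c \leqslant q$ or $a \leqslant \delta(q)$; the latter is excluded by hypothesis, so $c \leqslant q$, and together with the trivial $q \leqslant c$ this gives $(q:a) = q$.

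The implication $(2) \Rightarrow (3)$ is short. If $r, s \in L_\ast$ satisfy $rs \leqslant q$, $rs \nleqslant \phi(q)$ and $r \nleqslant \delta(q)$, then (2) applied with $a = r$ offers two options; the option $(q:r) = (\phi(q):r)$ together with $s \leqslant (q:r)$ would force $rs \leqslant \phi(q)$, which is ruled out, so $(q:r) = q$ and $s \leqslant q$.

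The main obstacle is $(3) \Rightarrow (1)$, because (3) only controls pairs of compact elements while (1) must handle arbitrary elements of $L$; here the standing hypotheses that $L$ is compactly generated and that finite products of compact elements are compact must be exploited together. Assume for contradiction that $ab \leqslant q$, $ab \nleqslant \phi(q)$, $a \nleqslant q$ and $b \nleqslant \delta(q)$; I aim to derive $ab \leqslant \phi(q)$ as a contradiction. By compact generation, I choose a compact $h_0 \leqslant a$ with $h_0 \nleqslant q$ and a compact $k_0 \leqslant b$ with $k_0 \nleqslant \delta(q)$. For any compact $h \leqslant a$ and $k \leqslant b$, the joins $h_0 \vee h$ and $k_0 \vee k$ are still compact, still lie below $a$ and $b$, and still fail to lie below $q$ and $\delta(q)$ respectively. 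The product $(h_0 \vee h)(k_0 \vee k)$ is compact by the standing hypothesis and lies below $ab \leqslant q$; if it were not below $\phi(q)$, then statement (3) applied with $s = h_0 \vee h$ and $r = k_0 \vee k$ would force $h_0 \vee h \leqslant q$ or $k_0 \vee k \leqslant \delta(q)$, both impossible. Hence $(h_0 \vee h)(k_0 \vee k) \leqslant \phi(q)$, and expanding the product yields $hk \leqslant \phi(q)$. Joining over all compact $h \leqslant a$ and $k \leqslant b$ then gives $ab \leqslant \phi(q)$, contradicting $ab \nleqslant \phi(q)$.
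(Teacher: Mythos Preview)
Your argument is correct in all three implications, and for $(2)\Rightarrow(3)$ it matches the paper exactly. The other two steps differ from the paper's route in instructive ways.

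For $(1)\Rightarrow(2)$ you apply the $\phi$-$\delta$-primary condition directly to the product $(q:a)\cdot a$ and split on whether this product lies below $\phi(q)$. The paper instead passes to compact elements $h\leqslant(q:a)$, shows each such $h$ lies below $q$ or below $(\phi(q):a)$, and then invokes Lemma~\ref{L} to conclude that $(q:a)$ itself lies below one of the two. Your approach is strictly simpler: since the definition of $\phi$-$\delta$-primary is stated for arbitrary elements of $L$ (not just compacts), there is no need for compact approximation or the auxiliary lemma at this step.

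For $(3)\Rightarrow(1)$ the two arguments diverge more substantially. The paper fixes compact witnesses $x'\leqslant a$, $y'\leqslant b$ with $x'y'\nleqslant\phi(q)$ (coming from $ab\nleqslant\phi(q)$) together with a compact $x\leqslant a$, $x\nleqslant\delta(q)$, and then shows directly that every compact $y\leqslant b$ satisfies $y\leqslant q$, whence $b\leqslant q$. You instead argue by contradiction: assuming $a\nleqslant q$ and $b\nleqslant\delta(q)$, you show that every product $hk$ of compacts below $a$ and $b$ is forced below $\phi(q)$, so that $ab\leqslant\phi(q)$, contradicting the hypothesis. Both arguments hinge on the same trick of adjoining fixed compact ``bad'' witnesses via joins to an arbitrary compact, and both need the standing assumption that finite products of compacts are compact; the paper's version yields the conclusion $b\leqslant q$ constructively, while yours packages the same idea as a reductio.
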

         \begin{proof}
         \textcircled{1}$\Longrightarrow$\textcircled{2}. Suppose \textcircled{1} holds. Let $h\in L_\ast$ be such that $h\leqslant (q:a)$ and $a\nleqslant \delta(q)$. Then $ah\leqslant q$. If $ah\leqslant \phi(q)$ then $h\leqslant (\phi(q):a)$. If $ah\nleqslant \phi(q)$ then since $q$ is $\phi$-$\delta$-primary and $a\nleqslant \delta(q)$, it follows that $h\leqslant q$. Hence by Lemma $\ref{L}$, either $(q:a)\leqslant (\phi(q):a)$ or $(q:a)\leqslant q$. Consequently, either $(q:a)=(\phi(q):a)$ or $(q:a)=q$.
         
         \textcircled{2}$\Longrightarrow$\textcircled{3}. Suppose \textcircled{2} holds. Let $rs\leqslant q$, $rs\nleqslant \phi(q)$ and $r\nleqslant \delta(q)$ for  $r,\ s\in L_\ast$. Then by \textcircled{2}, either $(q:r)=(\phi(q):r)$ or $(q:r)=q$. If $(q:r)=(\phi(q):r)$ then as $s\leqslant (q:r)$, it follows that $s\leqslant (\phi(q):r)$ which contradicts $rs\nleqslant \phi(q)$ and so we must have $(q:r)=q$. Therefore $s\leqslant (q:r)$ gives $s\leqslant q$.
         
         \textcircled{3}$\Longrightarrow$\textcircled{1}. Suppose \textcircled{3} holds. Let $ab\leqslant q$, $ab\nleqslant \phi(q)$ and $a\nleqslant\delta(q)$ for $a,\ b\in L$. Then as $L$ is compactly generated, there exist $x,\ x',\ y'\in L_\ast$ such that $x\leqslant a,\ x'\leqslant a,\ y'\leqslant b,\ x\nleqslant \delta(q)$ and $x'y'\nleqslant \phi(q)$. Let $y\leqslant b$ be any compact element of $L$. Then $(x\vee x'),\ (y\vee y')\in L_\ast$ such that $(x\vee x')(y\vee y')\leqslant q,\ (x\vee x')(y\vee y')\nleqslant \phi(q)$ and $(x\vee x')\nleqslant\delta(q)$. So by \textcircled{3}, it follows that $(y\vee y')\leqslant q$ which implies $b\leqslant q$ and therefore $q$ is $\phi$-$\delta$-primary.
         \end{proof}   
         
\begin{thm}\label{T-C27} 
 A proper element $q\in L$ is $\phi$-$\delta$-primary if and only if for every $a\in L$ such that $a\nleqslant q$ either $(q:a)\leqslant \delta(q)$ or $(q:a)=(\phi(q):a)$.
 \end{thm}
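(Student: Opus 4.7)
The plan is to prove both implications of this biconditional characterization using the same strategy as in Theorem \ref{T-C26}, with the roles of the two multiplicands interchanged. The key ingredient for the forward direction will be Lemma \ref{L}, and the backward direction is a direct unpacking of definitions.

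For the forward direction, I would fix a proper $\phi$-$\delta$-primary element $q$ and take any $a \in L$ with $a \nleqslant q$. To show one of the two alternatives for $(q:a)$ holds, I will verify the hypothesis of Lemma \ref{L} with $b = (q:a)$, $a_1 = \delta(q)$, and $a_2 = (\phi(q):a)$. So let $h \in L_{\ast}$ satisfy $h \leqslant (q:a)$; then $ah \leqslant q$. Split into the two cases $ah \leqslant \phi(q)$ (giving $h \leqslant (\phi(q):a)$) and $ah \nleqslant \phi(q)$ (using the $\phi$-$\delta$-primary hypothesis together with $a \nleqslant q$ to conclude $h \leqslant \delta(q)$). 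Lemma \ref{L} then yields either $(q:a) \leqslant \delta(q)$ or $(q:a) \leqslant (\phi(q):a)$. The second inclusion combined with the standing assumption $\phi(q) \leqslant q$, which forces $(\phi(q):a) \leqslant (q:a)$, upgrades to the equality $(q:a) = (\phi(q):a)$.

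For the backward direction, I would assume the stated condition and verify the definition directly. Take $a, b \in L$ with $ab \leqslant q$ and $ab \nleqslant \phi(q)$, and suppose $a \nleqslant q$ (otherwise we are done). Applying the hypothesis to $a$, either $(q:a) \leqslant \delta(q)$ or $(q:a) = (\phi(q):a)$. Since $ab \leqslant q$ gives $b \leqslant (q:a)$, the second alternative would yield $b \leqslant (\phi(q):a)$, i.e.\ $ab \leqslant \phi(q)$, contradicting our assumption. Hence $(q:a) \leqslant \delta(q)$, and therefore $b \leqslant \delta(q)$, as required.

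The only genuinely subtle step is the application of Lemma \ref{L} in the forward direction, and this is routine once one remembers that the lemma is exactly designed to convert a statement about all compact elements below $b$ into a statement about $b$ itself. No calculations, no additional hypotheses on $L$ beyond compact generation (already in force) and $\phi(q) \leqslant q$ (already assumed globally) are needed.
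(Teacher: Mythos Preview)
Your proposal is correct and follows essentially the same argument as the paper's own proof: the forward direction applies Lemma~\ref{L} to compact $h\leqslant(q:a)$ with the same two-case split on whether $ah\leqslant\phi(q)$, and the converse rules out the alternative $(q:a)=(\phi(q):a)$ by the same contradiction with $ab\nleqslant\phi(q)$. The only differences are cosmetic (variable names), so there is nothing to add.
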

 \begin{proof} Assume that a proper element $q\in L$ is $\phi$-$\delta$-primary. Let $h\in L_\ast$ be such that $h\leqslant (q:a)$ and $a\nleqslant q$. Then $ah\leqslant q$. If $ah\leqslant \phi(q)$ then $h\leqslant (\phi(q):a)$. If $ah\nleqslant  \phi(q)$ then since $q$ is $\phi$-$\delta$-primary  and $a\nleqslant q$, it follows that $h\leqslant \delta(q)$. Hence by Lemma $\ref{L}$, either $(q:a)\leqslant (\phi(q):a)$ or $(q:a)\leqslant\delta(q)$. But as $(\phi(q):a)\leqslant (q:a)$ we have either $(q:a)\leqslant \delta(q)$ or $(q:a)=(\phi(q):a)$. Conversely, assume that for every $a\in L$ such that $a\nleqslant q$, either $(q:a)\leqslant \delta(q)$ or $(q:a)=(\phi(q):a)$. Let $rs\leqslant q$, $rs\nleqslant \phi(q)$ and $r\nleqslant q$ for $r,\ s\in L$. Then either $(q:r)=(\phi(q):r)$ or $(q:r)\leqslant \delta(q)$. If  $(q:r)=(\phi(q):r)$ then as $s\leqslant (q:r)$, it follows that $s\leqslant (\phi(q):r)$ which contradicts $rs\nleqslant \phi(q)$ and so we must have $(q:r)\leqslant\delta(q)$. Therefore $s\leqslant (q:r)$ gives $s\leqslant\delta(q)$. Hence $q$ is $\phi$-$\delta$-primary.
 \end{proof}
 
 \begin{thm}
 Let $(L,\ m)$ be a quasi-local Noether lattice. If a proper element $p\in L$ is such that $p^2=m^2\leqslant p\leqslant m$ then $p$ is $\phi_2$-$\delta_1$-primary.
 \end{thm}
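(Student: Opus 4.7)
The plan is to prove the stronger statement that $p$ is in fact $\delta_1$-primary (i.e., primary in the usual sense), from which the $\phi_2$-$\delta_1$-primary conclusion follows automatically: the extra hypothesis $ab\nleqslant\phi_2(p)$ only restricts the premise of the implication, so any $\delta_1$-primary element is $\phi$-$\delta_1$-primary for every $\phi$. Thus, after this reduction, I never have to look at the condition $ab\nleqslant p^2$ at all; it plays the role of a red herring once the radical of $p$ is pinned down.

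The central computation will be to show $\sqrt{p}=m$ under the squeeze $m^2\leqslant p\leqslant m$. For the inequality $\sqrt{p}\leqslant m$ I would invoke the standard fact that in a compactly generated multiplicative lattice with $1$ compact the maximal element $m$ is prime: if $a,b\nleqslant m$ then by maximality $m\vee a=m\vee b=1$, so $1=(m\vee a)(m\vee b)\leqslant m$, a contradiction, hence $\sqrt{m}=m$ and so $\sqrt{p}\leqslant\sqrt{m}=m$. For the reverse inclusion I would use compact generation directly: every compact $x\leqslant m$ satisfies $x^2\leqslant m^2\leqslant p$, so $x\leqslant\sqrt{p}$; joining all such $x$ yields $m\leqslant\sqrt{p}$.

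With $\sqrt{p}=m=\delta_1(p)$ in hand, suppose $ab\leqslant p$ for some $a,b\in L$. Since $L$ is a quasi-local Noether lattice, every proper element is below the unique maximal element $m$ (by ACC, any proper element is below some maximal element, and here there is only one). So either $b<1$, in which case $b\leqslant m=\delta_1(p)$, or $b=1$, in which case $a=ab\leqslant p$. Either way the primary dichotomy holds, hence $p$ is $\delta_1$-primary and therefore $\phi_2$-$\delta_1$-primary.

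The only real obstacle is the radical identification $\sqrt{p}=m$: it is where both the compact-generation hypothesis and the maximal-is-prime fact get used, and it is what makes the interval condition $m^2\leqslant p\leqslant m$ so stringent. Once that is established, the rest is essentially a one-line quasi-local argument that does not even invoke the $\phi_2$ condition, which is why the conclusion is actually the stronger one that $p$ is primary.
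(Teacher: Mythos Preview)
Your proof is correct and follows essentially the same route as the paper's: both arguments never use the hypothesis $ab\nleqslant p^2$ and in fact establish that $p$ is $\delta_1$-primary outright, by observing that any element $\leqslant m$ has its square $\leqslant m^2\leqslant p$ (hence lies in $\sqrt{p}$), while anything $\nleqslant m$ must equal $1$. The only cosmetic difference is that you package this as the identity $\sqrt{p}=m$ before running the dichotomy on $b$, whereas the paper runs the case split on $x,y$ directly and derives $x\leqslant\delta_1(p)$ inside the case $x\leqslant m$; the underlying idea is identical.
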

 \begin{proof}
 Let $xy\leqslant p$ and $xy\nleqslant \phi_2(p)$ for $x,\ y\in L$. If $x\nleqslant m$ then $x=1$. So $xy\leqslant p$ gives $y\leqslant p$. Similarly, $y\nleqslant m$ gives $x\leqslant p$.  Now if $x\leqslant m$ then $x^2\leqslant m^2=p^2\leqslant p$ and hence $x\leqslant\delta_1(p)$. Similarly, $y\leqslant m$ gives $y\leqslant\delta_1(p)$. Hence in any case, $p$ is $\phi_2$-$\delta_1$ primary.
 \end{proof}                  
             
      To obtain the relation among $\phi_\alpha$-$\delta$-primary elements of $L$, we prove the following lemma.
             
     \begin{l1}\label{L-C1}
            Let $\gamma_1,\ \gamma_2 :L \longrightarrow L$ be functions such that $\gamma_1\leqslant\gamma_2$ and $\delta$ be an expansion function on $L$. Then every proper $\gamma_1$-$\delta$-primary element of $L$ is $\gamma_2$-$\delta$-primary.
            \end{l1}
            \begin{proof} Let a proper element $p\in L$ be $\gamma_1$-$\delta$-primary. Suppose $ab\leqslant p$ and $ab\nleqslant \gamma_2(p)$ for $a,\ b\in L$. Then as $\gamma_1\leqslant\gamma_2$, we have $ab\leqslant p$ and $ab\nleqslant \gamma_1(p)$. Since $p$ is $\gamma_1$-$\delta$-primary, it follows that either $a\leqslant p$ or $b\leqslant\delta(p)$ and hence $p$ is $\gamma_2$-$\delta$-primary.
            \end{proof} 
            
             \begin{thm}\label{T-C22}
             For a proper element of $L$, consider the following statements:
             \begin{enumerate}
             \item[(a)] $p$ is a $\delta$-primary element of $L$. 
             \item[(b)] $p$ is a $\phi_0$-$\delta$-primary element of $L$.
             \item[(c)] $p$ is a $\phi_\omega$-$\delta$-primary element of $L$.
             \item[(d)] $p$ is a $\phi_{(n+1)}$-$\delta$-primary element of $L$.
             \item[(e)] $p$ is a $\phi_n$-$\delta$-primary element of $L$ where $n\geqslant 2$.
             \item[(f)] $p$ is a $\phi_2$-$\delta$-primary element of $L$.
             \end{enumerate}
             Then $(a) \Longrightarrow (b) \Longrightarrow (c) \Longrightarrow (d) \Longrightarrow (e) \Longrightarrow (f)$.
           \end{thm}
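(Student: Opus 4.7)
The chain of implications is essentially a bookkeeping exercise built on the ordering $\phi_0 \leqslant \phi_\omega \leqslant \cdots \leqslant \phi_{n+1} \leqslant \phi_n \leqslant \cdots \leqslant \phi_2 \leqslant \phi_1$ displayed earlier in the section and Lemma \ref{L-C1}, which says that shrinking $\gamma$ to a smaller function preserves $\gamma$-$\delta$-primaryness in the weaker direction. The plan is to dispatch the first implication directly from the definitions and then let Lemma \ref{L-C1} do all the remaining work.

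For $(a)\Longrightarrow(b)$, I would just unwind the definitions. If $p$ is $\delta$-primary, then for all $a,b\in L$ with $ab\leqslant p$ we already get $a\leqslant p$ or $b\leqslant\delta(p)$, and the extra hypothesis $ab\nleqslant\phi_0(p)=0$ in the $\phi_0$-$\delta$-primary definition is only a further restriction on the pair $(a,b)$. So the implication is immediate.

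For $(b)\Longrightarrow(c)\Longrightarrow(d)\Longrightarrow(e)\Longrightarrow(f)$, I would invoke Lemma \ref{L-C1} each time with the appropriate pair of functions: $\phi_0\leqslant\phi_\omega$ gives $(b)\Longrightarrow(c)$; $\phi_\omega\leqslant\phi_{n+1}$ (which holds since $\phi_\omega(p)=\bigwedge_{i=1}^\infty p^i\leqslant p^{n+1}$) gives $(c)\Longrightarrow(d)$; $\phi_{n+1}\leqslant\phi_n$ (since $p^{n+1}\leqslant p^n$) gives $(d)\Longrightarrow(e)$; and $\phi_n\leqslant\phi_2$ for $n\geqslant 2$ (since $p^n\leqslant p^2$) gives $(e)\Longrightarrow(f)$.

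There is no real obstacle here: the only place where one might pause is verifying the comparisons $\phi_\omega\leqslant\phi_{n+1}$ and $\phi_{n+1}\leqslant\phi_n$, but these follow immediately from $p^{k+1}=p\cdot p^k\leqslant 1\cdot p^k=p^k$ and from the fact that the meet is a lower bound for each term. I would write the proof as a short block that disposes of $(a)\Longrightarrow(b)$ in one sentence and then lists the four remaining implications as consequences of Lemma \ref{L-C1} applied to the displayed chain of inequalities among the $\phi_\alpha$.
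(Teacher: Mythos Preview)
Your proposal is correct and follows essentially the same approach as the paper: dispatch $(a)\Longrightarrow(b)$ directly from the definitions, then obtain the remaining implications by applying Lemma~\ref{L-C1} to the displayed chain $\phi_0 \leqslant \phi_\omega \leqslant \cdots \leqslant \phi_{n+1} \leqslant \phi_n \leqslant \cdots \leqslant \phi_2$. Your explicit verification of the comparisons $\phi_\omega\leqslant\phi_{n+1}$ and $\phi_{n+1}\leqslant\phi_n$ is a minor elaboration, but the underlying argument is identical.
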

          \begin{proof}
          Obviously, every $\delta$-primary element of $L$ is weakly $\delta$-primary and hence $(a) \Longrightarrow (b)$. The remaining implications follow by using Lemma \ref{L-C1} to the fact that $\phi_0 \leqslant \phi_\omega \leqslant \cdots \leqslant \phi_{n+1} \leqslant \phi_n \leqslant \cdots \leqslant \phi_2$
          \end{proof}        
              
     \begin{c1}\label{L-C21}
       Let $p\in L$ be a proper element. Then $p$ is $\phi_\omega$-$\delta$-primary if and only if $p$ is $\phi_n$-$\delta$-primary for every $n\geqslant 2$.
       \end{c1}
       \begin{proof}
       Assume that $p\in L$ is $\phi_n$-$\delta$-primary  for every $n\geqslant 2$. Let $ab\leqslant p$ and $ab\nleqslant\bigwedge_{n=1}^{\infty}p^n$ for $a,\ b\in L$. Then $ab\leqslant p$ and $ab\nleqslant p^n$ for some $n\geqslant 2$. Since $p$ is $\phi_n$-$\delta$-primary, we have either $a\leqslant p$ or $b\leqslant\delta(p)$ and hence $p$ is $\phi_\omega$-$\delta$-primary. The converse follows from Theorem $\ref{T-C22}$.
       \end{proof}
       
   Now we show that under a certain condition, a $\phi_n$-$\delta$-primary element of $L$~ $(n\geqslant 2)$ is $\delta$-primary.
     
     \begin{thm}\label{T-C74}
     Let $L$ be a local Noetherian domain. A proper element $p\in L$ is $\phi_n$-$\delta$-primary for every $n\geqslant 2$ if and only if $p$ is $\delta$-primary.
     \end{thm}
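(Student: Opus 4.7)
The forward implication is immediate from Theorem \ref{T-C22}: the chain $(a) \Longrightarrow (b) \Longrightarrow \cdots \Longrightarrow (f)$ shows that a $\delta$-primary element is $\phi_n$-$\delta$-primary for every $n \geq 2$. All the work lies in the converse.

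For the converse, my first step would be to consolidate the hypothesis by applying Corollary \ref{L-C21}: the condition ``$p$ is $\phi_n$-$\delta$-primary for every $n \geq 2$'' is equivalent to ``$p$ is $\phi_\omega$-$\delta$-primary'', where $\phi_\omega(p) = \bigwedge_{n=1}^{\infty} p^n$. Thus it suffices to prove that in a local Noetherian domain $L$, every $\phi_\omega$-$\delta$-primary element is $\delta$-primary.

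Next, I would invoke the lattice-theoretic analogue of Krull's intersection theorem for Noether lattices: in a local Noether lattice $(L,m)$, one has $\bigwedge_{n=1}^{\infty} m^n = 0$. Since $p$ is proper, $p \leq m$, so $p^n \leq m^n$ for every $n$ and therefore $\phi_\omega(p) = \bigwedge_{n=1}^{\infty} p^n \leq \bigwedge_{n=1}^{\infty} m^n = 0$. Consequently the condition ``$ab \nleq \phi_\omega(p)$'' is simply ``$ab \neq 0$'', and the $\phi_\omega$-$\delta$-primary hypothesis on $p$ reduces to the weakly $\delta$-primary condition: for all $a,b \in L$ with $ab \leq p$ and $ab \neq 0$, either $a \leq p$ or $b \leq \delta(p)$.

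Finally, I would use the domain hypothesis to promote weak $\delta$-primarity to $\delta$-primarity. Given $ab \leq p$ with $a,b \in L$: if $ab \neq 0$, then weak $\delta$-primarity yields $a \leq p$ or $b \leq \delta(p)$; if instead $ab = 0$, then since $L$ has no zero divisors either $a = 0 \leq p$ or $b = 0 \leq p \leq \delta(p)$. In either case the defining condition for $\delta$-primarity is satisfied, so $p$ is $\delta$-primary. The main obstacle is the Krull-intersection step, which is not stated in the excerpt; one must cite (or briefly verify) that $\bigwedge m^n = 0$ in a local Noether lattice, a classical fact resting on modularity, principal generation, and the ACC rather than on general CG-lattice theory.
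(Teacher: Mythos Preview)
Your proof is correct and follows essentially the same route as the paper: both rely on the Krull intersection theorem for local Noether lattices (the paper cites Corollary~3.3 of \cite{D3} for $\bigwedge_{n} p^n = 0$) together with the domain hypothesis to handle the case $ab=0$, and both invoke Theorem~\ref{T-C22} for the easy direction. The only cosmetic difference is that you first pass through Corollary~\ref{L-C21} to reduce to the $\phi_\omega$ case, whereas the paper argues the case split $ab \nleqslant p^n$ for some $n$ versus $ab \leqslant p^n$ for all $n$ directly.
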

     \begin{proof}
     Assume that a proper element $p\in L$ is $\phi_n$-$\delta$-primary for every $n\geqslant 2$. Let $ab\leqslant p$ for $a,\ b\in L$. If $ab\nleqslant \phi_n(p)$ for $n\geqslant 2$ then as $p\in L$ is $\phi_n$-$\delta$-primary, we have $a\leqslant p$ or $b\leqslant \delta(q)$. If $ab\leqslant \phi_n(p)=p^n$ for all $n\geqslant 1$ then as $L$ is local Noetherian, by Corollary 3.3 of $\cite{D3}$, it follows that $ab\leqslant\bigwedge_{n=1}^{\infty}p^n=0$ and so $ab=0$. Since $L$ is domain, we have either $a=0$ or $b=0$ which implies either $a\leqslant p$ or $b\leqslant \delta(q)$ and hence $p$ is $\delta$-primary. Converse follows from Theorem \ref{T-C22}.
     \end{proof}
     
     \begin{c1}
     Let $L$ be a local Noetherian domain. A proper element $p\in L$ is $\phi_\omega$-$\delta$-primary if and only if $p$ is $\delta$-primary.
     \end{c1}
     \begin{proof}
     The proof follows from Theorem $\ref{T-C74}$ and Corollary $\ref{L-C21}$.
     \end{proof}
          
 Clearly, every $\delta$-primary element of $L$ is $\phi$-$\delta$-primary. The following example shows that its converse is not true (by taking $\phi$ as $\phi_2$ and $\delta$ as $\delta_1$ for convenience).
            
            \begin{e1}\label{E-C22}
            Consider the lattice $L$ of ideals of the ring $R=<Z_{30}\ , \ +\ ,\ \cdot>$. Then the only ideals of $R$ are the principal ideals (0),(2),(3),(5),(6),(10),(15),(1). Clearly $L=\{$(0),(2),(3),(5),(6),(10),(15),(1)$\}$ is a compactly generated multiplicative lattice. Its lattice structure and multiplication table is as shown in Figure 2.2. It is easy to see that the element $(6)\in L$ is $\phi_2$-$\delta_1$-primary but not $\delta_1$-primary.
            %\begin{figure}  
            \centerline{\includegraphics[width = 125mm, height = 75mm]{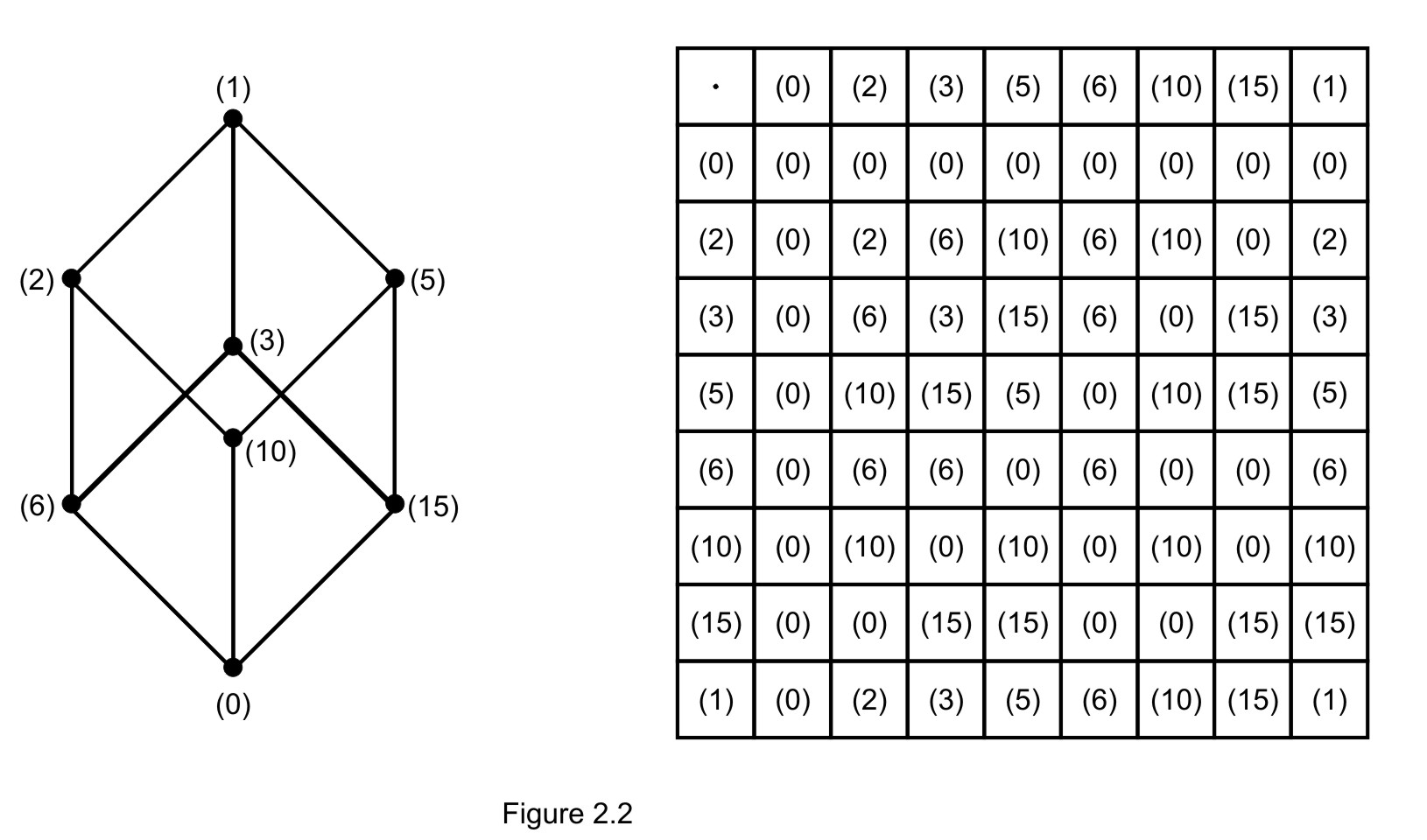}}
            %\end{figure}
            \end{e1} 
   
   In the following successive seven theorems, we show conditions under which a $\phi$-$\delta$-primary element of $L$ is $\delta$-primary.
   
   \begin{thm}\label{T-C72}
   Let $L$ be a Noether lattice. Let $0\neq q\in L$ be a non-nilpotent proper element satisfying the restricted cancellation law. Then $q$ is $\phi$-$\delta$-primary for some $\phi\leqslant\phi_2$ if and only if $q$ is $\delta$-primary.
   \end{thm}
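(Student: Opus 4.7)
The plan is to split the biconditional into two directions. The reverse implication is essentially free: if $q$ is $\delta$-primary, then whenever $ab\leqslant q$ we already get $a\leqslant q$ or $b\leqslant\delta(q)$, so the extra hypothesis $ab\nleqslant\phi(q)$ is not needed, and $q$ is $\phi$-$\delta$-primary for every $\phi$, in particular for any $\phi\leqslant\phi_2$. So the content of the theorem is the forward implication.

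For the forward direction, I would first reduce to the case $\phi=\phi_2$. If $q$ is $\phi$-$\delta$-primary for some $\phi\leqslant\phi_2$, then by Lemma \ref{L-C1} (applied with $\gamma_1=\phi$ and $\gamma_2=\phi_2$), $q$ is $\phi_2$-$\delta$-primary. So it suffices to show: a $\phi_2$-$\delta$-primary element of $L$ that is non-nilpotent and satisfies the restricted cancellation law is $\delta$-primary.

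To prove that, I would take $a,b\in L$ with $ab\leqslant q$ and assume $a\nleqslant q$, aiming to deduce $b\leqslant\delta(q)$. If $ab\nleqslant q^2=\phi_2(q)$, the $\phi_2$-$\delta$-primary property gives the conclusion immediately. So the heart of the argument is the case $ab\leqslant q^2$. Here I would consider the product $aq$ in relation to $q^2$ and split into two subcases. If $aq\nleqslant q^2$, then $a(b\vee q)=ab\vee aq\leqslant q$ but $a(b\vee q)\nleqslant q^2$, so applying $\phi_2$-$\delta$-primary to $a(b\vee q)$ forces $a\leqslant q$ (excluded) or $b\vee q\leqslant\delta(q)$, which yields $b\leqslant\delta(q)$. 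If instead $aq\leqslant q^2$, then $(a\vee q)q=aq\vee q^2=q^2=q\cdot q$, and since $q$ is non-nilpotent we have $q^2\neq 0$, so restricted cancellation forces $a\vee q=q$, i.e.\ $a\leqslant q$, contradicting the assumption.

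The main obstacle I anticipate is precisely this last step: choosing the right auxiliary elements ($b\vee q$ in one subcase, $a\vee q$ in the other) so that $\phi_2$-$\delta$-primary can be fed a product that escapes $q^2$, and recognizing that restricted cancellation applies to the equality $(a\vee q)q=q\cdot q$ because non-nilpotency supplies the needed nonzero hypothesis. Everything else is bookkeeping.
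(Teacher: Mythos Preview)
Your proof is correct and follows essentially the same strategy as the paper: reduce to $\phi_2$ via Lemma~\ref{L-C1}, then in the problematic case $ab\leqslant q^2$ join one of the factors with $q$ so that either the $\phi_2$-$\delta$-primary condition applies to a product escaping $q^2$, or restricted cancellation (with $q^2\neq 0$ from non-nilpotency) finishes the job. The only cosmetic difference is which factor you adjoin $q$ to: the paper works with $(x\vee q)y$ throughout and splits on whether $qy\leqslant q^2$, deducing $y\leqslant q\leqslant\delta(q)$ directly from cancellation, whereas you split on $aq$ versus $q^2$ and use cancellation on $(a\vee q)q=q\cdot q$ to contradict $a\nleqslant q$; these are mirror images of the same idea.
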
 
   \begin{proof}
   Assume that $q\in L$ is a $\delta$-primary element. Then obviously, $q$ is $\phi$-$\delta$-primary for every $\phi$ and hence for some $\phi\leqslant \phi_2$. Conversely, let $q\in L$ be $\phi$-$\delta$-primary for some $\phi\leqslant\phi_2$. Then by Lemma \ref{L-C1}, $q\in L$ is $\phi_2$-$\delta$-primary (almost $\delta$-primary). Let $xy\leqslant q$ for $x,\ y\in L$. If $xy\nleqslant \phi_2(q)$ then as $q$ is $\phi_2$-$\delta$-primary, we have either $x\leqslant q$ or $y\leqslant\delta(q)$. If $xy\leqslant \phi_2(q)=q^2$,  consider $(x\vee q)y=xy\vee qy\leqslant q$. If $(x\vee q)y\nleqslant \phi_2(q)$ then as $q$ is $\phi_2$-$\delta$-primary, we have either $x\leqslant (x\vee q)\leqslant q$ or  $y\leqslant\delta(q)$. So assume that $(x\vee q)y\leqslant \phi_2(q)$. Then $qy\leqslant q^2\neq 0$ which implies $y\leqslant q\leqslant \delta(q)$ by Lemma 1.11 of $\cite{W}$. Hence $q$ is $\delta$-primary.
   \end{proof}
   
   \begin{c1}
   Every non-zero and non-nilpotent $\phi_2$-$\delta$-primary element of a Noether lattice $L$ satisfying the restricted cancellation law is $\delta$-primary.
   \end{c1}
   \begin{proof}
   The proof follows from proof of the Theorem \ref{T-C72}.
   \end{proof}
   
   The following result is general form of Theorem \ref{T-C72}.
   
   \begin{thm}\label{T-C79}
      Let $L$ be a Noether lattice. Let $0\neq q\in L$ be a non-nilpotent proper element satisfying the restricted cancellation law. Then $q$ is $\phi$-$\delta$-primary for some $\phi\leqslant\phi_n$ and for all $n\geqslant 2$ if and only if $q$ is $\delta$-primary.
      \end{thm}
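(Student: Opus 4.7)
The plan is to reduce the theorem to Theorem \ref{T-C72} via Lemma \ref{L-C1}. The converse direction is immediate, since any $\delta$-primary element of $L$ is $\phi$-$\delta$-primary for every function $\phi$, in particular for any $\phi \leqslant \phi_n$ and any $n \geqslant 2$. So all of the work is in the forward direction.

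For the forward direction, the shortest route is to specialize the hypothesis to $n = 2$: this yields a function $\phi \leqslant \phi_2$ for which $q$ is $\phi$-$\delta$-primary, and Theorem \ref{T-C72} then gives that $q$ is $\delta$-primary. If one instead wants a proof that genuinely uses the full family $\{\phi_n\}_{n \geqslant 2}$ and mirrors the structure of Theorem \ref{T-C72}, I would proceed as follows. By Lemma \ref{L-C1}, the hypothesis upgrades to: $q$ is $\phi_n$-$\delta$-primary for every $n \geqslant 2$. Now let $xy \leqslant q$ in $L$. If $xy \nleqslant q^n$ for some $n \geqslant 2$, the $\phi_n$-$\delta$-primary property at that $n$ immediately yields $x \leqslant q$ or $y \leqslant \delta(q)$. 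Otherwise $xy \leqslant q^n$ for every $n \geqslant 2$, and I would apply the trick from Theorem \ref{T-C72}: set $z = x \vee q$, so that $zy = xy \vee qy \leqslant q$. If $zy \nleqslant q^n$ for some $n \geqslant 2$, then $\phi_n$-$\delta$-primary delivers either $z \leqslant q$ (hence $x \leqslant q$) or $y \leqslant \delta(q)$. Otherwise $zy \leqslant q^n$ for every $n \geqslant 2$, and in particular $qy \leqslant q^2$; the non-nilpotency of $q$ forces $q^2 \neq 0$, and the restricted cancellation law (Lemma 1.11 of $\cite{W}$) then gives $y \leqslant q \leqslant \delta(q)$.

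The only step requiring any care is the final one: to legitimately apply restricted cancellation to $qy \leqslant q^2$ one must know that $q^2 \neq 0$, which is exactly why the nonzero and non-nilpotency hypotheses on $q$ appear in the statement. Everything else is either a routine invocation of Lemma \ref{L-C1} or an $n$-indexed rewrite of the $\phi_2$-$\delta$-primary reasoning already carried out in Theorem \ref{T-C72}, so no genuinely new obstacle appears beyond what was already handled there.
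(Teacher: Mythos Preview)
Your detailed argument is essentially identical to the paper's proof: both upgrade via Lemma \ref{L-C1} to $q$ being $\phi_n$-$\delta$-primary for all $n\geqslant 2$, split on whether $xy$ (and then $(x\vee q)y$) lies below some $q^n$, and finish by applying the restricted cancellation law (Lemma 1.11 of \cite{W}) to $qy\leqslant q^2\neq 0$. Your additional observation that one can simply specialize the hypothesis to $n=2$ and invoke Theorem \ref{T-C72} directly is a valid shortcut that the paper does not take; it makes explicit that Theorem \ref{T-C79} carries no extra content beyond Theorem \ref{T-C72}.
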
 
      \begin{proof}
      Assume that $q\in L$ is a $\delta$-primary element. Then obviously, $q$ is $\phi$-$\delta$-primary for every $\phi$ and hence for some $\phi\leqslant \phi_n$, for all $n\geqslant 2$. Conversely, let $q\in L$ be $\phi$-$\delta$-primary for some $\phi\leqslant\phi_n$ and for all $n\geqslant 2$. Then by Lemma \ref{L-C1}, $q\in L$ is $\phi_n$-$\delta$-primary ($n$-almost $\delta$-primary) and for all $n\geqslant 2$. Let $xy\leqslant q$ for $x,\ y\in L$. If $xy\nleqslant \phi_n(q)$ for some $n\geqslant 2$ then as $q$ is $\phi_n$-$\delta$-primary, we have either $x\leqslant q$ or $y\leqslant\delta(q)$ and we are done. So let $xy\leqslant \phi_n(q)=q^n$ for all $n\geqslant 2$.  Consider $(x\vee q)y=xy\vee qy\leqslant q$. If $(x\vee q)y\nleqslant \phi_n(q)$ then as $q$ is $\phi_n$-$\delta$-primary, we have either $x\leqslant (x\vee q)\leqslant q$ or  $y\leqslant\delta(q)$. So assume that $(x\vee q)y\leqslant \phi_n(q)$. Then $qy\leqslant q^n\leqslant q^2\neq 0$ as $n\geqslant 2$. This implies $y\leqslant q\leqslant \delta(q)$ by Lemma 1.11 of $\cite{W}$. Hence $q$ is $\delta$-primary.
      \end{proof}
      
      \begin{c1}
      Every non-zero and non-nilpotent $\phi_n$-$\delta$-primary element $(\forall$  $n\geqslant 2)$ of a Noether lattice $L$  satisfying the restricted cancellation law is $\delta$-primary.
      \end{c1}
      \begin{proof}
      The proof follows from proof of the Theorem $\ref{T-C79}$.
      \end{proof}
      
  \begin{d1} 
   A proper element $p\in L$ is said to be {\bf 2-potent $\delta$-primary} if for all $a,\ b\in L$, $ab\leqslant p^2$ implies either $a\leqslant p$ or $b\leqslant \delta(p)$.
   \end{d1} 
   
   Obviously, every $2$-potent $\delta_0$-primary element of $L$ is $2$-potent prime and vice versa. Also, every $2$-potent $\delta_0$-primary element of $L$ is $2$-potent $\delta$-primary.
   
   \begin{thm}\label{T-C80}
   Let a proper element $q\in L$ be 2-potent $\delta$-primary. Then $q$ is $\phi$-$\delta$-primary for some $\phi\leqslant\phi_2$ if and only if $q$ is $\delta$-primary.
      \end{thm}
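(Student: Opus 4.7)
The forward implication is immediate: if $q$ is $\delta$-primary, then the hypothesis $ab\leqslant q$ already yields $a\leqslant q$ or $b\leqslant \delta(q)$, so the extra restriction $ab\nleqslant \phi(q)$ is irrelevant, and $q$ is $\phi$-$\delta$-primary for every $\phi$ (in particular for any $\phi\leqslant\phi_2$). So the real content is the converse.

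For the converse, my plan is to first invoke Lemma~\ref{L-C1} to replace the hypothesis ``$q$ is $\phi$-$\delta$-primary for some $\phi\leqslant\phi_2$'' by the cleaner statement ``$q$ is $\phi_2$-$\delta$-primary,'' since $\phi\leqslant\phi_2$ forces the $\phi_2$-version. Once this reduction is in place, the argument splits naturally on whether the product $xy$ falls below $\phi_2(q)=q^2$ or not.

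Concretely, assuming $q$ is both $\phi_2$-$\delta$-primary and $2$-potent $\delta$-primary, take $x,y\in L$ with $xy\leqslant q$. In the first case, $xy\nleqslant q^2$: then the $\phi_2$-$\delta$-primary property of $q$ directly gives $x\leqslant q$ or $y\leqslant \delta(q)$. In the second case, $xy\leqslant q^2$: now the $2$-potent $\delta$-primary hypothesis on $q$ applies to $xy\leqslant q^2$ and yields exactly the same conclusion $x\leqslant q$ or $y\leqslant \delta(q)$. Combining both cases shows $q$ is $\delta$-primary.

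I do not anticipate a real obstacle here; the theorem is essentially a clean case-split engineered by the definition of $2$-potent $\delta$-primary, which is tailored precisely to cover the ``bad'' case $xy\leqslant q^2$ that the $\phi_2$-$\delta$-primary property cannot handle on its own. The only bookkeeping issue is to make sure Lemma~\ref{L-C1} is cited to perform the reduction from arbitrary $\phi\leqslant \phi_2$ to $\phi_2$ itself, after which the proof is a two-line case analysis.
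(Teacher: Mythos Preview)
Your proposal is correct and follows essentially the same approach as the paper: the paper also dispatches the easy direction first, then invokes Lemma~\ref{L-C1} to reduce from an arbitrary $\phi\leqslant\phi_2$ to $\phi_2$, and finishes with the identical two-case split on whether $xy\leqslant q^2$. There is no substantive difference between your argument and the paper's.
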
 
      \begin{proof}
      Assume that $q\in L$ is a $\delta$-primary element. Then obviously, $q$ is $\phi$-$\delta$-primary for every $\phi$ and hence for some $\phi\leqslant \phi_2$. Conversely, let $q\in L$ be $\phi$-$\delta$-primary for some $\phi\leqslant\phi_2$. Then by Lemma \ref{L-C1}, $q\in L$ is $\phi_2$-$\delta$-primary (almost $\delta$-primary). Let $xy\leqslant q$ for $x,\ y\in L$. If $xy\nleqslant \phi_2(q)$ then as $q$ is $\phi_2$-$\delta$-primary, we have either $x\leqslant q$ or $y\leqslant\delta(q)$. If $xy\leqslant \phi_2(q)=q^2$ then as $q$ is 2-potent $\delta$-primary, we have either $x\leqslant q$ or $y\leqslant\delta(q)$. Hence $q$ is $\delta$-primary.
   \end{proof}
   
   \begin{c1}
   Every $\phi_2$-$\delta$-primary element of $L$ which is $2$-potent $\delta$-primary is $\delta$-primary.
   \end{c1}
   \begin{proof}
   The proof follows from proof of the Theorem $\ref{T-C80}$.
   \end{proof}
   
   \begin{thm}\label{T-C73}
   Let a proper element $q\in L$ be 2-potent $\delta_0$-primary. Then $q$ is $\phi$-$\delta$-primary for some $\phi\leqslant\phi_2$ if and only if $q$ is $\delta$-primary.
   \end{thm}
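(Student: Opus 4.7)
The plan is to recognize that Theorem \ref{T-C73} is essentially a direct specialization of the immediately preceding Theorem \ref{T-C80}, obtained by leveraging the fact (noted right after the definition of $2$-potent $\delta$-primary) that every $2$-potent $\delta_0$-primary element of $L$ is automatically $2$-potent $\delta$-primary. So the real content is just to verify this implication and then appeal to Theorem \ref{T-C80}.

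First I would dispose of the easy direction: if $q$ is $\delta$-primary, then $q$ is $\phi$-$\delta$-primary for every $\phi$, and in particular for some $\phi\leqslant\phi_2$. This is the same observation used in Theorems \ref{T-C72}, \ref{T-C79}, and \ref{T-C80}.

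For the converse, I would start from the hypothesis that $q$ is $2$-potent $\delta_0$-primary: for all $a,b\in L$, $ab\leqslant q^2$ implies $a\leqslant q$ or $b\leqslant\delta_0(q)=q$. Since $\delta$ is an expansion function on $L$, we have $q\leqslant\delta(q)$, so $b\leqslant q$ forces $b\leqslant\delta(q)$. Thus $ab\leqslant q^2$ implies $a\leqslant q$ or $b\leqslant\delta(q)$, showing $q$ is $2$-potent $\delta$-primary. Now the hypothesis that $q$ is $\phi$-$\delta$-primary for some $\phi\leqslant\phi_2$ is exactly the hypothesis of Theorem \ref{T-C80}, and applying that theorem yields that $q$ is $\delta$-primary.

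I do not expect any real obstacle: the only subtle point is remembering that the expansion property $q\leqslant\delta(q)$ is what makes $2$-potent $\delta_0$-primary strictly stronger than $2$-potent $\delta$-primary, and no part of the argument requires compact generation, Noetherianness, or cancellation. The proof is essentially a one-line reduction to Theorem \ref{T-C80}.
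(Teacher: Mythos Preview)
Your proposal is correct and follows the same approach as the paper: reduce to Theorem~\ref{T-C80} via the observation that every $2$-potent $\delta_0$-primary element is $2$-potent $\delta$-primary (since $q\leqslant\delta(q)$). The paper's proof is in fact stated as a one-line appeal to Theorem~\ref{T-C80} using exactly this fact.
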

   \begin{proof}
   The proof follows by using Theorem $\ref{T-C80}$ to the fact that every 2-potent $\delta_0$-primary element of $L$ is $2$-potent $\delta$-primary.
   \end{proof}
   
   \begin{c1}
      Every $\phi_2$-$\delta$-primary element of $L$ which is $2$-potent $\delta_0$-primary is $\delta$-primary.
      \end{c1}
      
   \begin{d1} 
      Let $n\geqslant 2$. A proper element $p\in L$ is said to be {\bf $n$-potent $\delta$-primary} if for all $a,\ b\in L$, $ab\leqslant p^n$ implies either $a\leqslant p$ or $b\leqslant \delta(p)$.
      \end{d1} 
      
 Obviously, every $n$-potent $\delta_0$-primary element of $L$ is $n$-potent $\delta$-primary.\\    
      
 The following result is general form of Theorem \ref{T-C80}.     
      
     \begin{thm}\label{T-C3}
      A proper element  $q\in L$ is $\phi$-$\delta$-primary for some $\phi\leqslant\phi_n$ where $n\geqslant 2$ if and only if $q$ is $\delta$-primary, provided $q$ is $k$-potent $\delta$-primary for some $k\leqslant n$.
      \end{thm}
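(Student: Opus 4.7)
The plan is to mirror the argument of Theorem \ref{T-C80}, with two straightforward adaptations: work with $\phi_n$ in place of $\phi_2$, and invoke the $k$-potent hypothesis via the inequality $q^n\leqslant q^k$, which holds because $k\leqslant n$ and multiplication in $L$ is monotone.

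For the forward (easy) direction, if $q$ is $\delta$-primary, then by definition $q$ is $\phi$-$\delta$-primary for every function $\phi:L\longrightarrow L$, and hence in particular for any $\phi\leqslant\phi_n$; the $k$-potent hypothesis is not needed here.

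For the converse, suppose $q$ is $\phi$-$\delta$-primary for some $\phi\leqslant\phi_n$ and is $k$-potent $\delta$-primary for some $k\leqslant n$. First I would apply Lemma \ref{L-C1} to upgrade the hypothesis to the statement that $q$ is $\phi_n$-$\delta$-primary. Then take arbitrary $x,y\in L$ with $xy\leqslant q$ and split into two cases. If $xy\nleqslant \phi_n(q)=q^n$, then the $\phi_n$-$\delta$-primary property directly yields $x\leqslant q$ or $y\leqslant\delta(q)$. Otherwise $xy\leqslant q^n$; since $k\leqslant n$ we have $q^n\leqslant q^k$, so $xy\leqslant q^k$, and then the $k$-potent $\delta$-primary hypothesis again yields $x\leqslant q$ or $y\leqslant\delta(q)$. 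In either case the defining condition of a $\delta$-primary element is satisfied.

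The main obstacle is purely notational: the only new ingredient beyond Theorem \ref{T-C80} is the observation $q^n\leqslant q^k$ for $k\leqslant n$, which lets the weaker $k$-potent condition absorb the $q^n$-case that arose from $\phi_n$. No further analysis of compact elements or use of Lemma \ref{L} is required, since both the $\phi_n$-$\delta$-primary and $k$-potent $\delta$-primary conditions are already formulated for all $x,y\in L$.
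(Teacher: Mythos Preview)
Your proposal is correct and follows essentially the same approach as the paper's proof: both apply Lemma~\ref{L-C1} to pass to $\phi_n$, then split into two cases using the inclusion $q^n\leqslant q^k$ (the paper phrases the split on $q^k$ and uses the contrapositive $xy\nleqslant q^k\Rightarrow xy\nleqslant q^n$, while you split on $q^n$ and deduce $xy\leqslant q^k$ in the second case, but this is the same reasoning).
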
 
      \begin{proof}
      Assume that $q\in L$ is a $\delta$-primary element. Then obviously, $q$ is $\phi$-$\delta$-primary for every $\phi$ and hence for some $\phi\leqslant \phi_n$ where $n\geqslant 2$. Conversely, let $q\in L$ be $\phi$-$\delta$-primary for some $\phi\leqslant\phi_n$ where $n\geqslant 2$. Then by Lemma \ref{L-C1}, $q\in L$ is $\phi_n$-$\delta$-primary ($n$-almost $\delta$-primary). Let $xy\leqslant q$ for $x,\ y\in L$. If $xy\nleqslant \phi_k(q)=q^k$ then $xy\nleqslant \phi_n(q)=q^n$ as $k\leqslant n$. Since $q$ is $\phi_n$-$\delta$-primary, we have either $x\leqslant q$ or $y\leqslant\delta(q)$. If $xy\leqslant \phi_k(q)=q^k$ then as $q$ is $k$-potent $\delta$-primary, we have either $x\leqslant q$ or $y\leqslant\delta(q)$. Hence $q$ is $\delta$-primary.
      \end{proof}  
      
   \begin{c1}
         Every $\phi_n$-$\delta$-primary element of $L$ which is $k$-potent $\delta$-primary is $\delta$-primary where $k\leqslant n$. 
         \end{c1}    
      
  \begin{thm}\label{T-C23}
   Let a proper element $q\in L$ be $\phi$-$\delta$-primary. If $q^2\nleqslant \phi(q)$ then $q$ is $\delta$-primary.
   \end{thm}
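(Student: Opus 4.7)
The plan is to use the standard three-step ``absorption'' argument, enlarging $a$ and/or $b$ by $q$ to force the product out of $\phi(q)$ while keeping it below $q$; the hypothesis $q^2\nleqslant\phi(q)$ is what finally secures the last step.

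Fix $a,b\in L$ with $ab\leqslant q$; I want either $a\leqslant q$ or $b\leqslant\delta(q)$. If $ab\nleqslant\phi(q)$, the $\phi$-$\delta$-primary hypothesis closes the case immediately, so assume $ab\leqslant\phi(q)$. Next, consider $(a\vee q)b=ab\vee qb\leqslant q$. If $(a\vee q)b\nleqslant\phi(q)$, apply the $\phi$-$\delta$-primary hypothesis to $(a\vee q)$ and $b$: either $a\vee q\leqslant q$ (so $a\leqslant q$) or $b\leqslant\delta(q)$. Otherwise, $(a\vee q)b\leqslant\phi(q)$, which in particular gives $qb\leqslant\phi(q)$. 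Symmetrically, look at $a(b\vee q)=ab\vee aq\leqslant q$; if this is not below $\phi(q)$, the $\phi$-$\delta$-primary hypothesis yields $a\leqslant q$ or $b\vee q\leqslant\delta(q)$, and the latter implies $b\leqslant\delta(q)$ since $q\leqslant\delta(q)$ by definition of an expansion function. Otherwise, $a(b\vee q)\leqslant\phi(q)$, so in particular $aq\leqslant\phi(q)$.

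At this point all three of $ab$, $aq$, $qb$ lie below $\phi(q)$. Now form the combined product
\[
(a\vee q)(b\vee q)=ab\vee aq\vee qb\vee q^{2}\leqslant q.
\]
If $(a\vee q)(b\vee q)$ were below $\phi(q)$, then each summand, including $q^{2}$, would be below $\phi(q)$, contradicting the hypothesis $q^{2}\nleqslant\phi(q)$. Hence $(a\vee q)(b\vee q)\nleqslant\phi(q)$, and the $\phi$-$\delta$-primary hypothesis applied to $a\vee q$ and $b\vee q$ gives $a\vee q\leqslant q$ or $b\vee q\leqslant\delta(q)$, that is, $a\leqslant q$ or $b\leqslant\delta(q)$. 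This exhausts all cases, so $q$ is $\delta$-primary.

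The routine bookkeeping is checking that each enlarged product still lies below $q$, which is immediate from $ab\leqslant q$ plus the fact that $q$ absorbs multiplication. The only genuine ingredient is the final appeal to $q^{2}\nleqslant\phi(q)$ to rule out the degenerate branch where every enlarged product collapses into $\phi(q)$; this is the step that the hypothesis is tailored for, and I do not expect any obstacle beyond organizing the case split cleanly.
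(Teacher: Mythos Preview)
Your proof is correct and follows essentially the same absorption/enlargement strategy as the paper's proof. The only cosmetic difference is that the paper introduces auxiliary elements $d\leqslant q$ and $r,s\leqslant q$ with $ad\nleqslant\phi(q)$ and $rs\nleqslant\phi(q)$, whereas you simply enlarge by $q$ itself throughout; your version is if anything slightly more streamlined, but the underlying argument is identical.
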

   \begin{proof}
   Let $ab\leqslant q$ for $a,\ b\in L$. If $ab\nleqslant \phi(q)$ then as $q$ is $\phi$-$\delta$-primary, we have either $a\leqslant q$ or $b\leqslant\delta(q)$. So assume that $ab\leqslant\phi(q)$. First suppose $aq\nleqslant\phi(q)$. Then $ad\nleqslant\phi(q)$ for some $d\leqslant q$ in $L$. Also $a(b\vee d)=ab\vee ad\leqslant q$ and $a(b\vee d)\nleqslant\phi(q)$. As $q$ is $\phi$-$\delta$-primary, either $a\leqslant q$ or $(b\vee d)\leqslant\delta(q)$. Hence either $a\leqslant q$ or $b\leqslant\delta(q)$. Similarly, if $bq\nleqslant \phi(q)$, we can show that either $a\leqslant q$ or $b\leqslant\delta(q)$. So we can assume that $aq\leqslant \phi(q)$ and $bq\leqslant\phi(q)$. Since $q^2\nleqslant \phi(q)$, there exist $r,\ s\leqslant q$ in $L$ such that $rs\nleqslant\phi(q)$. Then $(a\vee r)(b\vee s)\leqslant q$ but $(a\vee r)(b\vee s)\nleqslant\phi(q)$. As $q$ is $\phi$-$\delta$-primary, we have either $(a\vee r)\leqslant q$ or $(b\vee s)\leqslant\delta(q)$. Therefore either $a\leqslant q$ or $b\leqslant\delta(q)$ and hence $q$ is $\delta$-primary.
   \end{proof}
   
   From the Theorem $\ref{T-C23}$, it follows that,
   \begin{itemize}
   \item if a proper element $q\in L$ is $\phi$-$\delta$-primary but not $\delta$-primary then $q^2\leqslant \phi(q)$,
   \item a $\phi$-$\delta$-primary element $q<1$ of $L$ with $q^2\nleqslant \phi(q)$ is $\delta$-primary.
   \end{itemize}
   
     Clearly, given an expansion function $\delta$ on $L$, 
          $\delta(p)\leqslant \delta(\delta(p))$ for each $p\in L$. Moreover, for each $p\in L$, $\delta_1(\delta_1(p))=\delta_1(p)$, by property $(p3)$ of radicals in \cite{TM}. Also, obviously $\delta_0(\delta_0(p))=\delta_0(p)$ for each $p\in L$.\\
             
   Now we present the consequences of the Theorem $\ref{T-C23}$ in the form of following corollaries.
   \begin{c1}
   If a proper element $q\in L$ is $\phi$-$\delta$-primary but not $\delta$-primary then $\delta_1(q)=\delta_1(\phi(q))$.
   \end{c1}
   \begin{proof}
   By Theorem $\ref{T-C23}$, we have $q^2\leqslant \phi(q)$. So $q\leqslant\delta_1(\phi(q))$ which gives $\delta_1(q)\leqslant\delta_1(\delta_1(\phi(q)))=\delta_1(\phi(q))$. Since $\phi(q)\leqslant q$, we have $\delta_1(\phi(q))\leqslant\delta_1(q)$. Hence $\delta_1(q)=\delta_1(\phi(q))$.
   \end{proof}
   
   \begin{c1}\label{C-C1}
   If a proper element $q\in L$ is $\phi$-$\delta$-primary where $\phi\leqslant\phi_3$ then $q$ is $\phi_n$-$\delta$-primary for every $n\geqslant 2$. 
   \end{c1}
   \begin{proof}
   If $q$ is $\delta$-primary then by Theorem $\ref{T-C22}$, $q$ is $\phi_\omega$-$\delta$-primary. So assume that $q$ is not $\delta$-primary. Then by Theorem $\ref{T-C23}$ and by hypothesis, we get $q^2\leqslant\phi(q)\leqslant q^3$. Hence $\phi(q)=q^n$ for every $n\geqslant 2$. Consequently, $q$ is $\phi_n$-$\delta$-primary for every $n\geqslant 2$.  
   \end{proof}
   
   \begin{c1}
    If a proper element $q\in L$ is $\phi$-$\delta$-primary where $\phi\leqslant\phi_3$ then $q$ is $\phi_\omega$-$\delta$-primary.
   \end{c1}
   \begin{proof}
   The proof follows from Corollary $\ref{C-C1}$ and Corollary $\ref{L-C21}$.
   \end{proof}
   
   \begin{c1}
   If a proper element $q\in L$ is $\phi_0$-$\delta$-primary but not $\delta$-primary then $q^2=0$.
   \end{c1}
   \begin{proof}
   The proof is obvious.
   \end{proof}
   
   \begin{thm}
   Let $q$ be a $\phi$-$\delta$-primary element of $L$. If $\phi(q)$ is a $\delta$-primary element of $L$ then $q$ is $\delta$-primary.
   \end{thm}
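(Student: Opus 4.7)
The plan is a direct two-case argument on whether $ab \leq \phi(q)$ or not, exploiting the two hypotheses in the two cases respectively. Suppose $ab \leq q$ for arbitrary $a, b \in L$; we want to conclude $a \leq q$ or $b \leq \delta(q)$.

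In the first case, assume $ab \nleq \phi(q)$. Then the hypothesis that $q$ is $\phi$-$\delta$-primary applies immediately and yields $a \leq q$ or $b \leq \delta(q)$, which is exactly what we need.

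In the second case, assume $ab \leq \phi(q)$. Now the assumption that $\phi(q)$ is $\delta$-primary kicks in: applied to the inequality $ab \leq \phi(q)$ it produces either $a \leq \phi(q)$ or $b \leq \delta(\phi(q))$. In the first subcase we use the standing convention $\phi(q) \leq q$ (fixed after Definition \ref{D-C2}) to conclude $a \leq q$. In the second subcase we use monotonicity of the expansion function $\delta$: since $\phi(q) \leq q$, condition \textcircled{2} in the definition of an expansion function gives $\delta(\phi(q)) \leq \delta(q)$, hence $b \leq \delta(q)$.

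Both cases deliver the required dichotomy, so $q$ is $\delta$-primary. There is no genuine obstacle here; the only subtlety is ensuring the two small facts $\phi(q) \leq q$ and $\delta(\phi(q)) \leq \delta(q)$ are cited correctly, both of which are built into the framework set up at the start of Section~2.
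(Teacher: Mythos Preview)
Your proof is correct and follows essentially the same two-case argument as the paper's own proof: split on whether $ab \leqslant \phi(q)$, apply the $\phi$-$\delta$-primary hypothesis in one case and the $\delta$-primariness of $\phi(q)$ in the other, then use $\phi(q)\leqslant q$ and the monotonicity of $\delta$ to finish. Your citations of the standing convention and the expansion-function axiom are exactly what the paper relies on as well.
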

   \begin{proof}
   Let $ab\leqslant q$ for $a,\ b\in L$. If $ab\nleqslant \phi(q)$ then as $q$ is $\phi$-$\delta$-primary, we have either $a\leqslant q$ or $b\leqslant \delta(q)$ and we are done. Now if $ab\leqslant \phi(q)$ then $\phi(q)$ is $\delta$-primary, we have either $a\leqslant \phi(q)$ or $b\leqslant \delta(\phi(q))$. This implies that either $a\leqslant q$ or $b\leqslant \delta(q)$ because  $\phi(q)\leqslant q$ and $\delta(\phi(q))\leqslant \delta(q)$. 
   \end{proof}
   
 The next result shows that the join of a family of ascending chain of $\phi$-$\delta$-primary elements of $L$ is again  $\phi$-$\delta$-primary.
       
       \begin{thm}
       Let $\{p_i\mid i\in \triangle\}$ be a chain of $\phi$-$\delta$-primary elements of $L$ and let the function $\phi$ be such that $x\leqslant y$ imply $\phi(x)\leqslant \phi(y)$ for all $x, y\in L$. Then the element $p=\underset{i\in \triangle}{\vee}p_i$ is also $\phi$-$\delta$-primary.
       \end{thm}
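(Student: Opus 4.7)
The plan is to reduce to a compactness argument using characterization \textcircled{3} of Theorem \ref{T-C26}, which says it is enough to verify the $\phi$-$\delta$-primary condition on compact test pairs $r,s\in L_\ast$. So I would fix $r,s\in L_\ast$ with $rs\leqslant p$ and $rs\nleqslant\phi(p)$ and aim to show that either $r\leqslant p$ or $s\leqslant\delta(p)$, where $p=\underset{i\in\triangle}{\vee}p_i$.

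Because of the standing global hypothesis that every finite product of compact elements of $L$ is compact, the element $rs$ itself lies in $L_\ast$. Applying compactness to $rs\leqslant p=\underset{i\in\triangle}{\vee}p_i$ produces finitely many indices $i_1,\ldots,i_n\in\triangle$ with $rs\leqslant p_{i_1}\vee\cdots\vee p_{i_n}$. Since $\{p_i\mid i\in\triangle\}$ is a chain, this finite join collapses to the largest term, so there is a single index $i_0\in\triangle$ with $rs\leqslant p_{i_0}$. This is the only step that actually uses the chain hypothesis, and I expect it to be the main (mild) obstacle to watch.

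Next I would exploit the order-preservation of $\phi$, which is hypothesized in the statement: from $p_{i_0}\leqslant p$ we get $\phi(p_{i_0})\leqslant\phi(p)$, so if we had $rs\leqslant\phi(p_{i_0})$ then $rs\leqslant\phi(p)$, contradicting our choice of $r,s$. Hence $rs\leqslant p_{i_0}$ and $rs\nleqslant\phi(p_{i_0})$, and since $p_{i_0}$ is $\phi$-$\delta$-primary, either $r\leqslant p_{i_0}\leqslant p$ or $s\leqslant\delta(p_{i_0})\leqslant\delta(p)$, the last inequality using monotonicity of $\delta$ (which is part of the definition of an expansion function). By Theorem \ref{T-C26}, $p$ is $\phi$-$\delta$-primary.

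I would also remark in passing that one tacitly needs $p<1$ for the conclusion to make sense, since $\phi$-$\delta$-primary elements are by definition proper; this is standard for such join-of-a-chain results and can be stated as a running assumption or verified in whatever context the theorem is applied. Apart from this small bookkeeping point, the whole argument is a clean one-step reduction: compactness plus the chain condition localizes everything to a single $p_{i_0}$ and then the properties of $\phi$ and $\delta$ carry the local conclusion up to $p$.
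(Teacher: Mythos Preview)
Your argument is correct and, in fact, more careful than the paper's at the crucial step. The paper verifies the definition directly for arbitrary $a,b\in L$: it asserts that from $ab\leqslant p=\bigvee_i p_i$ and the chain hypothesis one obtains $ab\leqslant p_i$ for some $i$, and then proceeds as you do. That inference, however, needs $ab$ to be compact, which is not guaranteed for general $a,b$. By passing through characterization \textcircled{3} of Theorem~\ref{T-C26} you work with $r,s\in L_\ast$, so $rs\in L_\ast$ by the standing hypothesis on $L$, and the compactness-plus-chain collapse to a single $p_{i_0}$ is fully justified. The rest of your argument (monotonicity of $\phi$ to get $rs\nleqslant\phi(p_{i_0})$, and monotonicity of $\delta$ to lift $s\leqslant\delta(p_{i_0})$ to $s\leqslant\delta(p)$) matches the paper's.

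On your bookkeeping remark about $p<1$: this is not merely a running assumption but is proved in the paper from the standing hypothesis that $1$ is compact. Indeed, if $p=1$ then $1\leqslant\bigvee_i p_i$ forces $1\leqslant p_{i_1}\vee\cdots\vee p_{i_n}=p_{i_0}$ for some $i_0$ (using the chain condition), contradicting properness of $p_{i_0}$. You should include this one-line observation rather than defer it.
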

       \begin{proof}
       Since $1\in L$ is compact, $\underset{i\in \triangle}{\vee}p_i=p\neq 1$. Let $ab\leqslant p$, $ab\nleqslant \phi(p)$ and $a\nleqslant p$ for $a,\ b\in L$. Then as $\{p_i\mid i\in \triangle\}$ is a chain, we have $ab\leqslant p_i$ for some $i\in \triangle$ but $a\nleqslant p_i$ and $ab\nleqslant \phi(p_i)$ because for each $k\in \triangle$, we have $p_k\leqslant p$ and this implies $\phi(p_k)\leqslant \phi(p)$. As each $p_i$ is $\phi$-$\delta$-primary, it follows that $b\leqslant \delta(p_i)$. Since $p_i\leqslant p$, we have $\delta( p_i)\leqslant\delta(p)$ and so $b\leqslant\delta(p)$. Hence $p$ is $\phi$-$\delta$-primary.
       \end{proof}
       
    The following theorem shows that a under certain condition, $(p:q)\in L$ is $\phi$-$\delta$-primary if $p\in L$ is   $\phi$-$\delta$-primary element where $q\in L$.
       
       \begin{thm}
       Let a proper element $p\in L$ be $\phi$-$\delta$-primary. Then  $(p:q)$ is $\phi$-$\delta$-primary for all $q\in L$ if $(\phi(p):q)\leqslant \phi(p:q)$.
       \end{thm}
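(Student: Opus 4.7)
The plan is to verify the definition of $\phi$-$\delta$-primary directly for $(p:q)$. First I would record two easy preliminary facts that the argument will rely on. Since $q\leqslant 1$ and multiplication in $L$ is monotone, $pq\leqslant p\cdot 1=p$, so $p\leqslant (p:q)$; and because $\delta$ is an expansion function (hence monotone), this yields $\delta(p)\leqslant \delta((p:q))$. I would also note that the statement is non-vacuous only when $(p:q)<1$, i.e.\ when $q\nleqslant p$; otherwise $(p:q)=1$ is not proper and there is nothing to verify.

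Next, I would take $a,b\in L$ with $ab\leqslant (p:q)$ and $ab\nleqslant \phi((p:q))$, and aim to show that $a\leqslant (p:q)$ or $b\leqslant \delta((p:q))$. The defining property of the residual immediately gives $(aq)b=abq\leqslant p$. The key step is to split on whether or not $abq\leqslant \phi(p)$.

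If $abq\leqslant \phi(p)$, then $ab\leqslant (\phi(p):q)$, and the hypothesis $(\phi(p):q)\leqslant \phi(p:q)$ forces $ab\leqslant \phi((p:q))$, contradicting the standing assumption on $ab$. So we must be in the remaining case $abq\nleqslant \phi(p)$. Applying the $\phi$-$\delta$-primary property of $p$ to the product $(aq)\cdot b\leqslant p$ with $(aq)b\nleqslant \phi(p)$, we obtain $aq\leqslant p$ or $b\leqslant \delta(p)$. The first option gives $a\leqslant (p:q)$, and the second gives $b\leqslant \delta(p)\leqslant \delta((p:q))$ via the preliminary fact. Either way, $(p:q)$ satisfies the defining condition and is $\phi$-$\delta$-primary.

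There is no real obstacle here: the whole argument is residuation together with monotonicity of $\delta$. The only subtle point is that the technical hypothesis $(\phi(p):q)\leqslant \phi(p:q)$ is precisely what is consumed in ruling out the case $abq\leqslant \phi(p)$, so one must make sure to use it exactly there rather than elsewhere in the proof.
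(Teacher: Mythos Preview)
Your argument is correct and essentially identical to the paper's: both use $p\leqslant(p:q)$ to get $\delta(p)\leqslant\delta(p:q)$, use the hypothesis $(\phi(p):q)\leqslant\phi(p:q)$ to rule out $abq\leqslant\phi(p)$, and then apply the $\phi$-$\delta$-primary property of $p$ to $(aq)b$. The only cosmetic difference is that the paper assumes $a\nleqslant(p:q)$ at the outset (hence $aq\nleqslant p$) and concludes $b\leqslant\delta(p)$ directly, whereas you keep the disjunction and resolve both cases.
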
       
       \begin{proof}
       Clearly, $pq\leqslant p$ implies $p\leqslant (p:q)$ and so $\delta(p)\leqslant\delta(p:q)$. Now let $ab\leqslant (p:q)$, $ab\nleqslant \phi(p:q)$ and $a\nleqslant (p:q)$ for $a,\ b\in L$. Then $abq\leqslant p$, $abq\nleqslant \phi(p)$ and $aq\nleqslant p$ since $ab\nleqslant (\phi(p):q)$. Now as $p$ is $\phi$-$\delta$-primary, we have $b\leqslant \delta(p)\leqslant\delta(p:q)$ and hence $(p:q)$ is $\phi$-$\delta$-primary. 
       \end{proof} 
       
       In the next result, we show that under a certain condition $\delta_1(p)\leqslant \delta(p)$, for every $\phi$-$\delta$-primary $p\in L$.
       
       \begin{thm}\label{TL3}
        If a proper element $p\in L$ is $\phi$-$\delta$-primary element such that $\delta_1(\phi(p)) \leqslant \delta(p)$ then  $\delta_1(p)\leqslant \delta(p)$. 
       \end{thm}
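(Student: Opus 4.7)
The goal is to show that any compact element lying below $\delta_1(p)$ lies below $\delta(p)$. Since $L$ is compactly generated, $\delta_1(p) = \sqrt{p} = \vee\{x \in L_\ast \mid x^n \leqslant p \text{ for some } n \in \mathbb{Z}_+\}$, so it suffices to prove the following claim: for every compact $x \in L_\ast$ and every $n \geqslant 1$, if $x^n \leqslant p$ then $x \leqslant \delta(p)$. Taking the join over all such compact $x$ will then yield $\delta_1(p) \leqslant \delta(p)$.

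The plan is to prove the claim by strong induction on $n$. The base case $n = 1$ is immediate since $x \leqslant p \leqslant \delta(p)$ (using that $\delta$ is an expansion function). For the inductive step with $n \geqslant 2$, I would split into two cases according to whether or not $x^n$ lies below $\phi(p)$. If $x^n \leqslant \phi(p)$, then by definition of $\delta_1$, $x \leqslant \delta_1(\phi(p))$, and the hypothesis $\delta_1(\phi(p)) \leqslant \delta(p)$ gives $x \leqslant \delta(p)$ immediately.

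If instead $x^n \nleqslant \phi(p)$, I would write $x^n = x^{n-1} \cdot x$. Since $x \in L_\ast$ and finite products of compact elements are compact by the standing assumption on $L$, we have $x^{n-1} \in L_\ast$. Now $x^{n-1} \cdot x \leqslant p$ and $x^{n-1} \cdot x \nleqslant \phi(p)$, so the $\phi$-$\delta$-primary property of $p$ (invoked in its compact form from Theorem \ref{T-C26}(c), or directly from Definition \ref{D-C2}) yields either $x \leqslant \delta(p)$ (in which case we are done) or $x^{n-1} \leqslant p$. In the latter subcase, the inductive hypothesis applied to $x$ with exponent $n - 1$ produces $x \leqslant \delta(p)$, completing the induction.

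The only subtlety is making sure the induction is set up so that the hypothesis can be re-applied to the \emph{same} element $x$; this is why the factorization $x^n = x^{n-1} \cdot x$ is used rather than $x \cdot x^{n-1}$, and why one needs the standing compactness-closure hypothesis on $L$ to ensure $x^{n-1}$ is compact when invoking the $\phi$-$\delta$-primary condition. No other obstacle is anticipated; the hypothesis $\delta_1(\phi(p)) \leqslant \delta(p)$ is exactly what is needed to absorb the degenerate case where the powers of $x$ happen to fall into $\phi(p)$.
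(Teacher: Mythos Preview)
Your proof is correct and follows essentially the same approach as the paper. The only difference is that where you argue by strong induction on the exponent $n$, the paper instead chooses the \emph{least} $k$ with $x^{k}\leqslant p$, so that minimality gives $x^{k-1}\nleqslant p$ and the $\phi$-$\delta$-primary condition then yields $x\leqslant\delta(p)$ in one step, avoiding the inductive appeal.
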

       \begin{proof}
       Assume that a proper element $p\in L$ is $\phi$-$\delta$-primary. For $a\in L$, let $a\leqslant\delta_1(p)=\sqrt{p}$. Then there exists a least positive integer $k$ such that $a^{k}\leqslant p$. If $k=1$ then $a\leqslant p\leqslant \delta(p)$. Now let $k>1$. If $a^k \leqslant \phi(p)$ then $a\leqslant \delta_1(\phi(p)) \leqslant \delta(p)$. So let $a^k \nleqslant \phi(p)$.  Clearly, $a^{k-1}a\leqslant p$ where $a^{k-1}\nleqslant p$.  As $p\in L$ is $\phi$-$\delta$-primary, it follows that $a\leqslant\delta(p)$. Thus in any case, we have $\delta_1(p)\leqslant\delta(p)$.
       \end{proof}
       
       Note that, if $p\in L$ is $\delta$-primary then by consequence of Theorem 2.5 of \cite{MB1}, we have $\phi(p)\leqslant p$ implies $\delta_1(\phi(p))\leqslant \delta_1(p)\leqslant \delta(p)$ and hence $\delta_1(\phi(p)) \leqslant \delta(p)$.
       
       \begin{c1}
       If a proper element $p\in L$ is $\phi$-$\delta$-primary element such that $\delta_1(\phi(p)) \leqslant \delta(p)$ with $\delta(p)\leqslant \delta_1(p)$ then $\delta_1(p)= \delta(p)$. 
       \end{c1}
       \begin{proof}
       The proof follows from Theorem \ref{TL3}.
       \end{proof}
       
       According to \cite{MB1}, an expansion function $\delta$ on $L_1$ and on $L_2$ is said to have global property if for any lattice isomorphism $f:L_1\longrightarrow L_2$,  $\delta(f^{-1}(a))=f^{-1}(\delta(a))$ for all $a\in L_2$ where $L_1$ and $L_2$ are multiplicative lattices.  Similarly, now we define global property of a function $\phi$ on multiplicative lattices.
       \begin{d1} 
       Let $L_1$ and $L_2$ be multiplicative lattices. A  function $\phi$ on $L_1$ and on $L_2$ is said to have {\bf global property} if for any lattice isomorphism $f:L_1\longrightarrow L_2$,  $\phi(f^{-1}(a))=f^{-1}(\phi(a))$ for all $a\in L_2$. 
       \end{d1}
       
       \begin{l1}\label{L-C2}
       Let the function $\beta$ on $L_1$ and on $L_2$ have the global property where $L_1$ and $L_2$ are multiplicative lattices.  If the function $g:L_1\longrightarrow L_2$ is a lattice isomorphism then $g(\beta(q))=\beta(g(q))$ for all $q\in L_1$.
       \end{l1}
       \begin{proof}
       For $q\in L_1$, the global property of $\beta$ gives $\beta(q)= \beta(g^{-1}(g(q)))=g^{-1}(\beta(g(q)))$. Then since $g$ is onto, we have $g(\beta(q))=\beta(g(q))$.
       \end{proof}
       
The next result shows that if $q\in L$ is $\phi$-$\delta$-primary with some conditions on $\delta$ and $\phi$ then $\delta(q)\in L$ is $\phi$-prime element.
       
       \begin{thm}\label{T-C70}
                     Let the expansion function $\delta$ on $L$ be a lattice isomorphism. Let the function $\phi$ on $L$ have the global property. If a proper element $q\in L$ is $\phi$-$\delta$-primary and satisfies $\delta(\delta(q))\leqslant\delta(q)$ then $\delta(q)$ is a $\phi$-prime element of $L$.
                     \end{thm}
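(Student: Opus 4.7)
The plan is to verify the $\phi$-prime condition for $\delta(q)$ directly from its definition, transporting everything back to $q$ via the inverse of $\delta$. So I would start by picking $a, b \in L$ with $ab \leqslant \delta(q)$ and $ab \nleqslant \phi(\delta(q))$, and aim to deduce $a \leqslant \delta(q)$ or $b \leqslant \delta(q)$. I will also need to note at the outset that $\delta(q) < 1$: since $\delta$ is a lattice isomorphism it must send the top to the top, so $\delta(q) = 1$ would force $q = 1$, contradicting the properness of $q$.

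The main translation step uses two ingredients. First, because $\delta$ is a lattice isomorphism on the multiplicative lattice $L$, the inverse $\delta^{-1}$ is order-preserving and multiplication-preserving, so $ab \leqslant \delta(q)$ rewrites as $\delta^{-1}(a)\delta^{-1}(b) \leqslant q$. Second, the global property of $\phi$ together with Lemma \ref{L-C2} applied to the isomorphism $g = \delta$ yields $\delta(\phi(q)) = \phi(\delta(q))$, equivalently $\delta^{-1}(\phi(\delta(q))) = \phi(q)$. Applying $\delta^{-1}$ to $ab \nleqslant \phi(\delta(q))$ therefore gives $\delta^{-1}(a)\delta^{-1}(b) \nleqslant \phi(q)$.

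Now I can feed these two inequalities into the hypothesis that $q$ is $\phi$-$\delta$-primary. This produces either $\delta^{-1}(a) \leqslant q$ or $\delta^{-1}(b) \leqslant \delta(q)$. In the first case, applying the order-preserving map $\delta$ gives $a \leqslant \delta(q)$. In the second case, $\delta$ gives $b \leqslant \delta(\delta(q))$, and here the extra hypothesis $\delta(\delta(q)) \leqslant \delta(q)$ is exactly what is needed to collapse this to $b \leqslant \delta(q)$. This completes the verification that $\delta(q)$ is $\phi$-prime.

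The only conceptual obstacle is justifying that the lattice isomorphism $\delta$ respects multiplication so that $\delta^{-1}(ab) = \delta^{-1}(a)\delta^{-1}(b)$; in the context of multiplicative lattices this is the standard meaning of the term, and without it the first translation step fails. Every other step is a routine consequence of order-preservation, the global property of $\phi$, and the $\phi$-$\delta$-primary property of $q$; the hypothesis $\delta(\delta(q)) \leqslant \delta(q)$ is used only in the very last case to absorb the second application of $\delta$.
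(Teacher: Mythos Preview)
Your proposal is correct and follows essentially the same approach as the paper: transport the product $ab$ back through $\delta^{-1}$, invoke Lemma~\ref{L-C2} to identify $\phi(\delta(q))$ with $\delta(\phi(q))$, apply the $\phi$-$\delta$-primary hypothesis on $q$, and then push forward again using $\delta(\delta(q))\leqslant\delta(q)$ in the second case. Your version is in fact slightly more careful than the paper's, since you explicitly check that $\delta(q)$ is proper and treat both alternatives from the $\phi$-$\delta$-primary definition, whereas the paper handles only the case $x\nleqslant\delta(q)$ and contains a minor typo (writing $\delta^{-1}(x)\leqslant q$ where $\nleqslant$ is intended).
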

                     \begin{proof}
                     By Lemma \ref{L-C2}, we have $\delta(\phi(q))=\phi(\delta(q))$.
                     Let $xy\leqslant \delta(q)$, $xy\nleqslant \phi(\delta(q))=\delta(\phi(q))$ and $x\nleqslant \delta(q)$ for $x,\ y\in L$.  So  $\delta^{-1}(x)\cdot \delta^{-1}(y)=\delta^{-1}(xy)\leqslant q$, $\delta^{-1}(x)\cdot \delta^{-1}(y)=\delta^{-1}(xy)\nleqslant \phi(q)$  and $\delta^{-1}(x)\leqslant q$. As $q$ is $\phi$-$\delta$-primary, we have $\delta^{-1}(y)\leqslant \delta(q)$ which implies $y\leqslant \delta(\delta(q))\leqslant\delta(q)$ and hence $\delta(q)$ is a $\phi$-prime element of $L$. 
                     \end{proof}
                     
                     Note that, in the Theorem \ref{T-C70}, the idea behind taking the expansion function $\delta$ on $L$ as a lattice isomorphism and the function $\phi$ on $L$ with the global property is to get $\delta(\phi(q))=\phi(\delta(q))$.  The following theorem is a similar version of Theorem \ref{T-C70}.
                     
                     \begin{thm}
                     If a proper element $q\in L$ is $\phi$-$\delta_1$-primary such that $\delta_1(\phi(q))=\phi(\delta_1(q))$ then $\delta_1(q)$ is a $\phi$-prime element of $L$.
                     \end{thm}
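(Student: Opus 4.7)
The plan is to verify the $\phi$-prime condition for $\delta_1(q) = \sqrt{q}$, namely: $xy \leqslant \sqrt{q}$ and $xy \nleqslant \phi(\sqrt{q})$ imply $x \leqslant \sqrt{q}$ or $y \leqslant \sqrt{q}$. The hypothesis $\delta_1(\phi(q)) = \phi(\delta_1(q))$ rewrites the second assumption as $xy \nleqslant \sqrt{\phi(q)}$, and the strategy is to pull both inequalities down to compact elements, where the radical admits its power-characterization, and then to invoke that $q$ is $\phi$-$\delta_1$-primary.

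Assume $x \nleqslant \sqrt{q}$; I would show $y \leqslant \sqrt{q}$. Using compact generation, I would choose a compact $a_0 \leqslant x$ with $a_0 \nleqslant \sqrt{q}$, and since $xy \nleqslant \sqrt{\phi(q)}$, extract compacts $a^* \leqslant x$ and $b^* \leqslant y$ with $a^* b^* \nleqslant \sqrt{\phi(q)}$ (expanding $xy$ as the join of products $a_i b_j$ over compact $a_i \leqslant x$, $b_j \leqslant y$ and using compactness of a witness in $xy$ not lying under $\sqrt{\phi(q)}$). For an arbitrary compact $h \leqslant y$, set $A = a_0 \vee a^*$ and $B = h \vee b^*$. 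Both $A,B$, and hence $AB$, are compact by the standing hypothesis that finite products of compacts are compact. Moreover $A \nleqslant \sqrt{q}$ (since $a_0 \leqslant A$), $AB \leqslant \sqrt{q}$, and $a^* b^* \leqslant AB$ together with $a^* b^* \nleqslant \sqrt{\phi(q)}$ gives $AB \nleqslant \sqrt{\phi(q)}$.

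Because $AB$ is compact with $AB \leqslant \sqrt{q}$, there is $n$ with $(AB)^n = A^n B^n \leqslant q$; because $AB \nleqslant \sqrt{\phi(q)}$, no power of $AB$ lies under $\phi(q)$, so in particular $A^n B^n \nleqslant \phi(q)$. Applying the $\phi$-$\delta_1$-primary property of $q$ to $A^n B^n \leqslant q$ with $A^n B^n \nleqslant \phi(q)$ yields $A^n \leqslant q$ or $B^n \leqslant \sqrt{q}$. The first alternative would give $A \leqslant \sqrt{q}$, contradicting $a_0 \leqslant A$ and $a_0 \nleqslant \sqrt{q}$; in the second alternative $B^n$ is compact, so $(B^n)^m \leqslant q$ for some $m$, whence $h \leqslant B \leqslant \sqrt{q}$. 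Since this holds for every compact $h \leqslant y$, compact generation gives $y \leqslant \sqrt{q}$. The principal obstacle I anticipate is the compactness bookkeeping in the first step: choosing a single pair $(A,B)$ that simultaneously witnesses $A \nleqslant \sqrt{q}$ and $AB \nleqslant \sqrt{\phi(q)}$ while absorbing an arbitrary compact $h \leqslant y$, so that one application of the $\phi$-$\delta_1$-primary hypothesis settles every compact below $y$ uniformly.
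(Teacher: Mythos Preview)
Your proof is correct and shares the same core idea as the paper's: translate $xy\leqslant\sqrt{q}$ into a power $(xy)^n\leqslant q$, use the hypothesis $\delta_1(\phi(q))=\phi(\delta_1(q))$ to ensure $(xy)^n\nleqslant\phi(q)$, and then apply the $\phi$-$\delta_1$-primary property of $q$ to the factorization $x^n\cdot y^n$. The difference is one of rigor rather than strategy. The paper argues directly with arbitrary $a,b\in L$, treating the step ``$ab\leqslant\sqrt{q}\Rightarrow (ab)^n\leqslant q$ for some $n$'' as immediate, and finishes by invoking $\delta_1(\delta_1(q))=\delta_1(q)$ once it obtains $b^n\leqslant\delta_1(q)$. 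You instead first reduce to compact elements $A,B$ (built from the witnesses $a_0,a^*,b^*$ and an arbitrary compact $h\leqslant y$), so that both the extraction of the exponent $n$ from $AB\leqslant\sqrt{q}$ and the conclusion $B\leqslant\sqrt{q}$ from $B^{nm}\leqslant q$ are justified directly by the definition of the radical in a CG-lattice. What your extra bookkeeping buys is that the argument is airtight without assuming that arbitrary (non-compact) elements below $\sqrt{q}$ have a power below $q$; what the paper's terser version buys is brevity, at the cost of leaving that point implicit.
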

                     \begin{proof} Assume that $ab\leqslant \delta_1(q)$, $ab\nleqslant\phi(\delta_1(q))$ and $a\nleqslant \delta_1(q)$ for $a,\ b\in L$. Then there exists $n\in Z_+$ such that $a^n\cdot b^n=(ab)^n\leqslant q$. If $(ab)^n\leqslant\phi(q)$ then by hypothesis $ab\leqslant\delta_1(\phi(q))=\phi(\delta_1(q))$, a contradiction. So we must have $a^n\cdot b^n=(ab)^n\nleqslant\phi(q)$. Since $q$ is $\phi$-$\delta_1$-primary and $a^n\nleqslant q$ for all $n\in Z_+$, we have $b^n\leqslant\delta_1(q)$ and hence $b\leqslant\delta_1(\delta_1(q))=\delta_1(q)$. This shows that $\delta_1(q)$ is a $\phi$-prime element of $L$.
                     \end{proof}      
              
     \begin{l1}\label{L-1}
       Let the expansion function $\delta$ on $L_1$ and on $L_2$ have the global property where $L_1$ and $L_2$ are multiplicative lattices. Let the function $\phi$ on $L_1$ and on $L_2$ have the global property. If $f:L_1\longrightarrow L_2$ is a lattice isomorphism then for any $\phi$-$\delta$-primary element $p\in L_2$,  $f^{-1}(p)\in L_1$ is $\phi$-$\delta$-primary.
       \end{l1} 
       \begin{proof}
       Assume that a proper element $p\in L_2$ is $\phi$-$\delta$-primary. Let $ab\leqslant f^{-1}(p)$, $ab\nleqslant \phi(f^{-1}(p))=f^{-1}(\phi(p))$ and $a\nleqslant f^{-1}(p)$ for $a,\ b\in L_1$. Then $f(ab)=f(a)\cdot f(b)\leqslant p$, $f(ab)=f(a)\cdot f(b)\nleqslant \phi(p)$  and $f(a)\nleqslant p$. As $p$ is $\phi$-$\delta$-primary, we have $f(b)\leqslant \delta(p)$. Now the global property of $\delta$ gives $b\leqslant f^{-1}(\delta(p))=\delta(f^{-1}(p))$ showing that $f^{-1}(p)\in L_1$ is $\phi$-$\delta$-primary.
       \end{proof}       
        
       The following result gives another characterization of $\phi$-$\delta$-primary elements of $L$.
       
       \begin{thm}
         Let the expansion function $\delta$ on $L_1$ and on $L_2$ have the global property where $L_1$ and $L_2$ are multiplicative lattices. Let the function $\phi$ on $L_1$ and on $L_2$ have the global property. Let $f:L_1\longrightarrow L_2$ be a lattice isomorphism. Then a proper element $a\in L_1$ is $\phi$-$\delta$-primary if and only if $f(a)\in L_2$ is $\phi$-$\delta$-primary.
       \end{thm}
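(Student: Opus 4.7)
The plan is to deduce both directions of this biconditional directly from Lemma \ref{L-1}, which is the heavy-lifting result already in place. The key observation is that the hypotheses on $\delta$ and $\phi$ (the global property on both $L_1$ and on $L_2$) are fully symmetric in $L_1$ and $L_2$, and that the inverse of a lattice isomorphism is again a lattice isomorphism. So Lemma \ref{L-1} can be applied equally well to $f$ and to $f^{-1}$, which exactly matches the two implications in the theorem.

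For the $(\Longleftarrow)$ direction, I would assume $f(a)\in L_2$ is $\phi$-$\delta$-primary. Applying Lemma \ref{L-1} to the isomorphism $f:L_1\longrightarrow L_2$ with $p:=f(a)$, we get that $f^{-1}(p)\in L_1$ is $\phi$-$\delta$-primary. Since $f^{-1}(f(a))=a$, this gives precisely that $a$ is $\phi$-$\delta$-primary. (One should first note that $f(a)$ being proper in $L_2$ forces $a$ to be proper in $L_1$, since $f$ is an order isomorphism and therefore sends $1_{L_1}$ to $1_{L_2}$.)

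For the $(\Longrightarrow)$ direction, I would set $g:=f^{-1}:L_2\longrightarrow L_1$, which is again a lattice isomorphism, and observe that $g^{-1}=f$. Because the global property of $\delta$ and $\phi$ is assumed on both $L_1$ and $L_2$, Lemma \ref{L-1} applies with $L_1$ and $L_2$ swapped and with $g$ in place of $f$. Thus if $a\in L_1$ is $\phi$-$\delta$-primary, the lemma (with $p:=a$) yields that $g^{-1}(a)=f(a)\in L_2$ is $\phi$-$\delta$-primary, as desired.

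There is essentially no obstacle beyond bookkeeping: the real work (translating the defining inequalities through $f$, using the global property to push $\delta$ and $\phi$ across the isomorphism, and checking the implication of $\phi$-$\delta$-primariness) was already done inside Lemma \ref{L-1}. The only mildly subtle point to verify is the symmetry of the hypothesis, namely that the global property is a statement about the pair $(L_1,L_2)$ with respect to \emph{any} lattice isomorphism between them, so in particular with respect to $f^{-1}$; this is immediate from the definition.
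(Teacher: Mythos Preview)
Your proof is correct, but for the forward direction you take a different (and tidier) route than the paper. The paper proves $(\Longrightarrow)$ directly: it invokes Lemma \ref{L-C2} to get $f(\delta(a))=\delta(f(a))$ and $f(\phi(a))=\phi(f(a))$, then, given $xy\leqslant f(a)$ with $xy\nleqslant\phi(f(a))$ and $x\nleqslant f(a)$, it pulls $x,y$ back through $f$ to elements $b,c\in L_1$, applies the $\phi$-$\delta$-primary hypothesis on $a$, and pushes the conclusion forward again via $f$. Only the converse $(\Longleftarrow)$ is deduced from Lemma \ref{L-1}. You instead exploit the symmetry of the setup and apply Lemma \ref{L-1} twice, once to $f$ and once to $g=f^{-1}$; this avoids repeating the element-chasing that Lemma \ref{L-1} already encapsulates.

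One small remark: your claim that the symmetry of the global property is ``immediate from the definition'' deserves a word of justification, since the definition is literally stated for isomorphisms $L_1\to L_2$. What you need is that for $g:L_2\to L_1$ one has $\phi(g^{-1}(b))=g^{-1}(\phi(b))$; this follows by applying the definition to the isomorphism $g^{-1}:L_1\to L_2$ and then inverting, which is exactly the content of Lemma \ref{L-C2}. Citing Lemma \ref{L-C2} here would make your symmetry step airtight.
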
 
       \begin{proof}
       Assume that a proper element $a\in L_1$ is $\phi$-$\delta$-primary. Clearly, by Lemma \ref{L-C2}, the global property of $\delta$ gives $f(\delta(a))=\delta(f(a))$. Also, by Lemma \ref{L-C2}, the global property of $\phi$ gives $f(\phi(a))=\phi(f(a))$. Now, let $xy\leqslant f(a)$, $xy\nleqslant \phi(f(a))$ and $x\nleqslant f(a)$ for $x,\ y\in L_2$. Then there exists $b,\ c\in L_1$ such that $f(b)=x,\ f(c)=y$. So $f(bc)=f(b)\cdot f(c)=xy\leqslant f(a)$, $f(bc)=f(b)\cdot f(c)=xy\nleqslant \phi(f(a))=f(\phi(a))$ and $f(b)=x\nleqslant f(a)$. As $a$ is $\phi$-$\delta$-primary in $L_1$, $bc\leqslant a$, $bc\nleqslant \phi(a)$ and $b\nleqslant a$, we have $c\leqslant \delta(a)$. So $y=f(c)\leqslant f(\delta(a))$ and hence $y\leqslant \delta(f(a))$ showing that $f(a)\in L_2$ is $\phi$-$\delta$-primary. The converse follows from Lemma \ref{L-1}. 
       \end{proof} 
       
       Now we relate idempotent element of $L$ with $\phi_n$-$\delta$-primary element $(n\geqslant 2)$ of $L$.
       
       \begin{thm}\label{T-C71}
       Every idempotent element of $L$ is $\phi_\omega$-$\delta$-primary and hence $\phi_n$-$\delta$-primary $(n\geqslant 2)$.
       \end{thm}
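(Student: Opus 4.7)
The plan is to show that for a (proper) idempotent element $p \in L$, the hypothesis appearing in the definition of a $\phi_\omega$-$\delta$-primary element is never satisfied, so the required implication holds vacuously. This reduces the theorem to a single computation of $\phi_\omega(p)$.

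The key step I would carry out first is a short induction on $n$: from $p = p^2$ one gets $p^3 = p\cdot p^2 = p\cdot p = p$, and in general $p^n = p$ for every $n \geq 1$. Consequently
\[
\phi_\omega(p) \;=\; \bigwedge_{n=1}^{\infty} p^n \;=\; p.
\]
With this in hand, the condition "$ab \leq p$ and $ab \nleq \phi_\omega(p)$" becomes "$ab \leq p$ and $ab \nleq p$", which can never occur. Hence the implication "$ab \leq p$ and $ab \nleq \phi_\omega(p) \Longrightarrow a \leq p$ or $b \leq \delta(p)$" is satisfied vacuously, and $p$ is $\phi_\omega$-$\delta$-primary by Definition \ref{D-C2}.

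For the "hence" clause, I would simply invoke Theorem \ref{T-C22}, which tells us that every $\phi_\omega$-$\delta$-primary element is $\phi_n$-$\delta$-primary for every $n \geq 2$, so the conclusion is immediate once the $\phi_\omega$ case is settled.

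There is no real obstacle here; the only mild subtlety is to recognize that the claim implicitly restricts to proper idempotents (because Definition \ref{D-C2} is formulated for proper elements only), and that the "vacuous truth" argument is legitimate precisely because $\phi_\omega(p) = p$ collapses the two inequalities on $ab$ into a contradiction. No use of compactness, the function $\phi$ beyond its value at $p$, or properties of $\delta$ beyond the expansion axioms is needed.
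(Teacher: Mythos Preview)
Your proof is correct and follows essentially the same approach as the paper: from $p=p^2$ one gets $p^n=p$ for all $n\geq 1$, hence $\phi_\omega(p)=p$, so $p$ is $\phi_\omega$-$\delta$-primary (vacuously), and Theorem~\ref{T-C22} then gives the $\phi_n$-$\delta$-primary conclusion for $n\geq 2$. Your write-up is just slightly more explicit about the vacuous-truth mechanism and the induction on $n$ than the paper's version.
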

       \begin{proof}
       Let $p$ be an idempotent element of $L$. Then $p=p^n$ for all $n\in Z_+$. So $\phi_\omega(p)=p$. Therefore $p$ is a $\phi_\omega$-$\delta$-primary of $L$. Hence $p$ is a $\phi_n$-$\delta$-primary element $(n\geqslant 2)$ of $L$ by Theorem \ref{T-C22}.
       \end{proof}
       
       As a consequence of Theorem \ref{T-C71}, we have following result whose proof is obvious.
       
       \begin{c1}
       Every idempotent element of $L$ is $\phi_2$-$\delta$-primary.
       \end{c1}
       
       However, a $\phi_2$-$\delta$-primary element of $L$ need not be idempotent as shown in the following example (by taking $\delta$ as $\delta_1$ for convenience).
       
       \begin{e1}\label{E-C23}
       Consider the lattice $L$ of ideals of the ring $R=<Z_8\ , \ +\ ,\ \cdot>$. Then the only ideals of $R$ are the principal ideals (0),(2),(4),(1). Clearly, $L=\{(0),(2),(4),(1)\}$ is a compactly generated multiplicative lattice. Its lattice structure and multiplication table is as shown in Figure 2.3.  It is easy to see that the element $(4)\in L$ is $\phi_2$-$\delta_1$-primary but not idempotent.
       
       \centerline{\includegraphics[width = 125mm, height = 75mm]{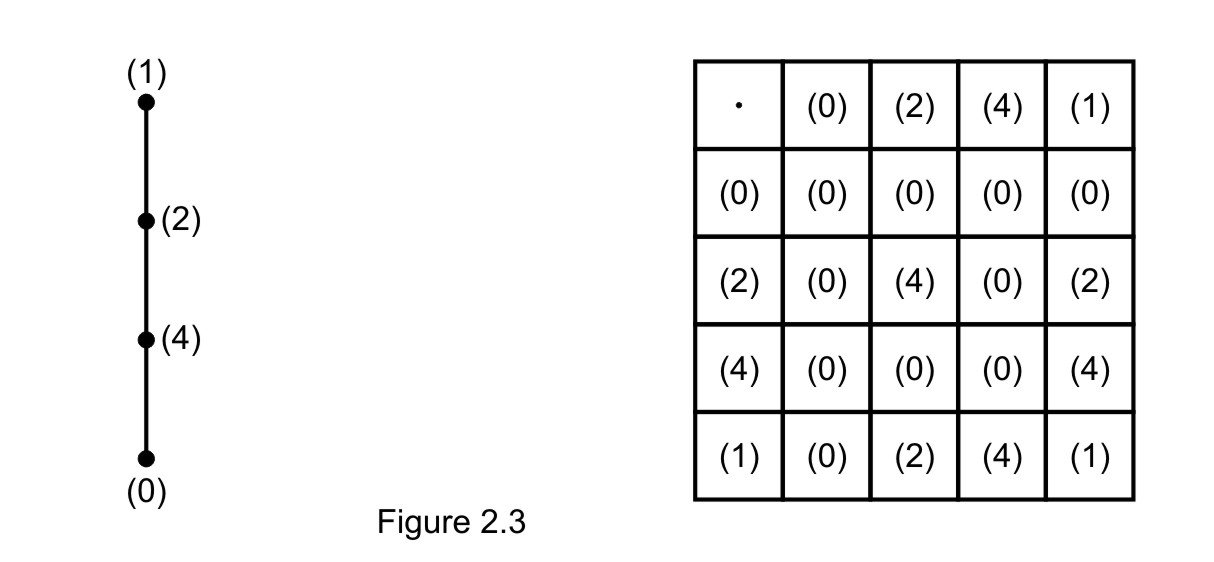}}    
     
       \end{e1}
       
       We conclude this paper with the following examples, from which it is clear that,
       
       \textcircled{1} a $\phi_2$-$\delta_1$-primary element of $L$ need not be 2-potent $\delta_0$-primary,
        
       \textcircled{2} a 2-potent $\delta_0$-primary element of $L$ which is $\phi_2$-$\delta_1$-primary need not be prime.
       
       \begin{e1}
       Consider $L$ as in Example $\ref{E-C22}$. Here the element $(6)\in L$ is $\phi_2$-$\delta_1$-primary but not $2$-potent $\delta_0$-primary.
       \end{e1}
       
       \begin{e1}
       Consider $L$ as in Example $\ref{E-C23}$. Here the element $(4)\in L$ is $2$-potent $\delta_0$-primary, $\phi_2$-$\delta_1$-primary but not prime.
       \end{e1}

\end{document}